\newtheorem{theorem}{Theorem}[section]
\newaliascnt{lemma}{theorem}
\newtheorem{lemma}[lemma]{Lemma}
\newaliascnt{corollary}{theorem}
\newaliascnt{proposition}{theorem}
\newtheorem{proposition}[proposition]{Proposition}
\theoremstyle{definition}
\newaliascnt{definition}{theorem}
\newtheorem{definition}[definition]{Definition}
\newaliascnt{remark}{theorem}
\newtheorem{remark}[remark]{Remark}
\newcommand{\norm}[1]{\lvert#1\rvert}
\newcommand{\normbig}[1]{\bigl\lvert#1\bigr\rvert}
\newcommand{\abs}[1]{\lvert#1\rvert}
\newcommand{\absbig}[1]{\bigl\lvert#1\bigr\rvert}
\newcommand{\floor}[1]{\lfloor#1\rfloor}
\newcommand{\bmax}{\vee}
\newcommand{\bmin}{\wedge}
\newcommand{\1}{\mathds{1}}
\newcommand{\one}[1]{\mathds{1}_{\{#1\}}}
\newcommand*{\diff}{\mathop{}\!\mathrm{d}}
\newcommand{\R}{\mathbb{R}}
\newcommand{\N}{\mathbb{N}}
\newcommand{\Z}{\mathbb{Z}}
\newcommand{\PP}{\mathbb{P}}
\newcommand{\E}{\mathbb{E}}
\newcommand{\cF}{\mathcal{F}}
\newcommand{\cS}{\mathcal{S}}
\newcommand{\cK}{\mathcal{K}}
\newcommand{\bQ}{\bar{Q}}
\newcommand{\bR}{\bar{R}}
\newcommand{\bI}{\bar{I}}
\newcommand{\bT}{\bar{T}}
\author{Pieter Jacob Storm, Wouter Kager, Michel Mandjes and Sem Borst}
\title{Stability of a Stochastic Ring Network}
\date{\today}
\begin{document}

\maketitle

\begin{abstract}
	In this paper we establish a necessary and sufficient stability condition 
	for a stochastic ring network. Such networks naturally appear in a variety 
	of applications within communication, computer, and road traffic systems. 
	They typically involve \textit{multiple customer types} and some form of 
	\textit{priority structure} to decide which customer receives service. 
	These two system features tend to complicate the issue of identifying a 
	stability condition, but we demonstrate how the ring topology can be 
	leveraged to solve the problem. 

	\medskip
	\noindent
	{\sc Keywords.} {Cellular automata $\diamond$ Communication networks 
	$\diamond$ Fluid models $\diamond$ Ring-topology queueing networks 
	$\diamond$ Stability and bottleneck analysis $\diamond$ Traffic flow 
	theory}

	\medskip
	\noindent
	{\sc Affiliations.} 

	Pieter Jacob Storm\textsuperscript{\textdagger}: Department of Mathematics 
	and Computer Science, Eindhoven University of Technology, Eindhoven, The 
	Netherlands.

	Wouter Kager: Department of Mathematics, Vrije Universiteit Amsterdam, 
	Amsterdam, The Netherlands.

	Michel Mandjes\textsuperscript{\textdagger}: Korteweg--de Vries Institute 
	for Mathematics, University of Amsterdam, Amsterdam,
	The Netherlands; {\sc Eurandom}, Eindhoven University of Technology, 
	Eindhoven, The Netherlands; Amsterdam Business School, Faculty of 
	Economics and Business, University of Amsterdam, Amsterdam, The 
	Netherlands.

	Sem Borst\textsuperscript{\textdagger}: Department of Mathematics and 
	Computer Science, Eindhoven University of Technology, Eindhoven, The 
	Netherlands.

	\medskip
	\textsuperscript{\textdagger}Partly funded by NWO Gravitation project
	{\sc Networks}, grant number 024.002.003.
\end{abstract}

\section{Introduction}
\label{sec: intro}

This paper deals with the analysis of a hybrid cellular automaton and queueing 
network model with a ring structure. Specifically, customers (particles) are 
routed probabilistically between stations (cells) according to a network with 
a ring topology, and customers within the ring have priority over exogenously 
arriving customers. Application domains of such networks include 
packet-switched \textit{optical ring networks}~\cite{herzog2004, yang2004, 
yuang2010}, road traffic intersections in the form of 
\textit{roundabouts}~\cite{belz2016, storm2020}, \textit{multiprocessor 
systems} with ring-based nanophotonic on-chip networks~\cite{bourduas2011, 
Ghosh2019}, and \textit{medium access control}~(MAC) protocols for Local Area 
Networks~(LANs)~\cite{vArem1990, coffman1998}. 

The model that we consider was proposed as a discrete-time Markov chain 
in~\cite{storm2020}. The main contribution of this paper is a necessary and 
sufficient condition for stability (i.e., positive recurrence) of the model. 
This proves the conjecture on the stability region in~\cite{storm2020}. Our 
main result also implies that the stability condition is sufficient for the 
slotted-ring model studied in~\cite{vArem1990} to be stable in a particular 
sense (see Section~\ref{subsec: slotted-ring model} for details). 
In~\cite{vArem1990}, this result was only shown to hold under the rather 
strong additional assumption that, among other things, sequences of certain 
characteristic inter-event times of the system are asymptotically strongly 
stationary, and the ergodic rates at which these events occur are 
well-defined. Our paper proves that this assumption is unnecessary, and is in 
fact implied by the stability condition.

\bigskip

Stability (or positive recurrence) of a Markov process is a fundamental 
condition for many theorems in Markov chain theory that are commonly used to 
study stationary or asymptotic properties. The mathematical theory of 
stability is well-developed~\cite{bremaud2013, MT2012, revuz2008}. However, 
for queueing models that involve multiple customer classes and priorities, 
like the one in this paper, determining conditions for stability remains a 
notoriously difficult task. In particular, many instances with such features 
exist where a subcritical system load (i.e., an arrival rate strictly below~1 
of each station's normalized workload) is not sufficient for 
stability~\cite{bramson1994, kumar1989, lu1991, rybko, seidman1994}. Moreover, 
the model that we consider does not have a product-form stationary 
distribution~\cite{storm2020}, which makes it impossible to determine a 
stability condition from the normalization of the stationary measure. As it 
turns out, though, we can leverage the specific routing topology of a ring 
model to formally establish the stability condition.

An important consequence of the necessity and sufficiency of the stability 
condition is that this condition can be interpreted as the \textit{system 
capacity}. Specifically, it implies that the set of arrival rates for which 
the model is stable is \textit{monotone}, meaning that the system does not 
lose its property of positive recurrence when arrival rates are decreased. 
This monotonicity is not automatic for multiclass networks with priorities, 
see~\cite{dai1999} for an example. In the context of communication systems and 
road traffic, capacity is an important measure for the load that the system is 
able to process; see, e.g.,~\cite{bramson2, li} for similar recent work in 
this area.

We prove the stability condition for the model in~\cite{storm2020} using a 
fluid model approach. More precisely, we establish a coupling between the 
Markov chain associated with the model and the queue length process of a 
multiclass queueing network, such that they follow the same sample paths up to 
a bijection. As a consequence, proving positive recurrence of the Markov chain 
can be reduced to proving stability of the simpler fluid model associated with 
the multiclass network~\cite{dai, bramson}. We prove the stability of the 
fluid model with an approach based on~\cite{tassiulas1996}, with some 
modifications that are required in our context. This approach exploits the 
relationship between the stability condition and the marginal stationary rate 
at which segments of the ring are occupied when the model is stable.

\bigskip

As mentioned earlier, stochastic ring networks that involve queues are 
ubiquitous within communication and transportation systems. In these domains, 
priority service for traffic on the ring over exogenously arriving traffic is 
natural to create free flow on the ring, so as to avoid collisions and 
eliminate the need for buffering within the ring. Additionally, the 
destination of a packet (or vehicle) typically depends on where it entered the 
ring, giving rise to \textit{multiple customer types}. The combination of 
these properties (a ring, a priority structure, and multiple customer types) 
motivates the specific model we consider in this paper. 

In the context of communication networks, ring topologies are the cornerstone 
of various widely deployed architectures, and our model pertains to LANs with 
slotted-ring MAC protocols~\cite{coffman1998, vArem1990, vanArem1990b, 
zafirovic1988} as well as Wavelength-Division Multiplexing (WDM) based 
Metropolitan Area Networks~(MANs) with packet switching or optical burst 
switching, see \cite[Section~4.1]{bogaerts2012} and~\cite{Fiems2016, 
herzog2004, yang2004, yuang2010}. Nowadays, ring-topology waveguides are a 
standard component in optical networks because of their ability to transmit 
large amounts of data via light in a short amount of time~\cite{chremmos2010, 
Jara2020, Singh2018}. While current networks mostly rely on wavelength 
routing, optical burst switching and packet switching mechanisms offer finer 
granularity and more dynamic resource sharing, and thus allow for higher 
efficiency and bandwidth utilization, especially with bursty and unpredictable 
traffic patterns.

Ring structures are also encountered in traffic networks as a form of 
intersection design, known as a roundabout. On a roundabout, vehicles traverse 
an intersection via a circulating ring to which several roads are attached 
that function as on- and off-ramps. Vehicles on the circulating ring have 
priority over vehicles in the attached legs, cf.~\cite{belz2016, storm2020}. 
These properties make them very similar, both in function and dynamics, to 
rings in communication systems, as described above. The results in this paper 
apply to a model that was originally proposed for single-lane 
roundabouts~\cite{storm2020}.

\bigskip

The outline of the paper is as follows. We start by introducing the model in 
Section~\ref{sec:model}. Section~\ref{sec: preliminaries} provides a 
preliminary analysis, introduces the stability condition, and states our main 
theorem. In Section~\ref{sec: multiclass network description} we couple the 
model to a multiclass queueing network. We then proceed by proving stability 
of this multiclass queueing network in Section~\ref{sec: proof main result} 
using fluid limits. Finally, Section~\ref{sec: related models} discusses 
related models to which our results can be applied directly and possible 
extensions to other models, and Section~\ref{sec: conclusions} concludes.

\emph{Notation.} Unless otherwise specified, all random variables and 
processes are defined on a common probability space~$(\,\Omega,\cF,\PP\,)$. We 
write $\Z_+$ for the non-negative integers, $\N$ for the natural numbers 
(i.e., $\N \equiv \Z_+ \setminus \{0\}$), $\R$ for the real numbers, and 
$\R_+$ for the set of non-negative reals. For any dimension~$d$, 
$\norm{\,\cdot\,}$ denotes the $L^1$-norm on elements of~$\R^d$. If $a$ 
and~$b$ are real numbers, then $a \bmin b$ is the minimum and $a\bmax b$ the 
maximum of $a$ and~$b$.

\section{Model description}
\label{sec:model}

As we mentioned in the introduction, the model we consider is a stochastic 
ring network that has a variety of application domains, but was originally 
introduced as a model for a roundabout in~\cite{storm2020}. In this section, 
to describe the model, we stay close to the original formulation as a 
roundabout model and use the related (road traffic) terminology. For technical 
reasons, which we will explain in Section~\ref{sec: related models}, we 
consider a version of the model that differs slightly from the roundabout 
model in~\cite{storm2020}. For the model we consider, we identify and prove a 
necessary and sufficient condition for stability. This result also identifies 
the global stability regions for the model in~\cite{storm2020} and the 
slotted-ring model in~\cite{vArem1990}, as we formally show in 
Section~\ref{sec: related models}.

The roundabout model is a slotted ring consisting of $L$~cells, with on-ramp 
queues in front of each cell. The cells and queues are indexed by $i \in 
\{1,\ldots,L\}$, with cell~1 adjacent to cell~$L$; using this cyclic 
structure, we allow ourselves to use the index~$i +\nobreak L$ to refer to 
cell/queue~$i$. The presence of vehicles on the roundabout is modeled by the 
state of the cells. Each cell can either be empty, or contain a vehicle that 
has entered the roundabout at some cell~$j$. In the first case we say the 
state of cell~$i$ is~0 and in the second case we say that its state is~$j$; we 
will also say that a cell is occupied by a vehicle of \emph{type}~$j$ when the 
state of the cell is~$j$. The queues model vehicles waiting to enter the 
roundabout, and their state is a number in~$\Z_+$.

The model has discrete-time dynamics. The main idea is that vehicles arrive 
via the queues to the roundabout, traverse a number of cells to an off-ramp, 
and depart the roundabout. At each time~$t \in\nobreak \Z_+$, if there are 
vehicles in queue~$i$ and cell~$i$ is empty, one vehicle from the queue will 
enter the roundabout and occupy cell~$i +\nobreak 1$ at time~$t +\nobreak 1$. 
If cell~$i$ is occupied by a vehicle of type~$j$, then the vehicle will either 
depart from the system with probability~$q_{ij} \in [0,1]$, or occupy cell~$i 
+\nobreak 1$ at the next time step. We emphasize that the probability~$q_{ij}$ 
depends on~$j$ to reflect that the cell at which a vehicle leaves the system 
may depend on where it entered. To model arriving vehicles, we impose that at 
every time step, a new vehicle will arrive to queue~$i$ with probability~$p_i 
\in [0,1]$. We assume that all decisions whether a vehicle arrives at a 
cell~$i$ and whether a vehicle of type~$j$ will leave cell~$i$ (if present), 
are made independently of each other and of the current state and history of 
the process (see Appendix~\ref{sec: appendix equality markov chains} for an 
explicit construction of the process). Note that on-ramps and off-ramps can be 
removed by setting arrival or departure probabilities equal to zero.
 
We now provide a more precise account of these dynamics by formulating update 
rules for the system (as is customary for cellular automata). These rules are 
\textit{local} in the sense that we only need to know the current joint states 
of queue~$i$ and cell~$i$, and can disregard the remainder of the system, to 
determine the new states of queue~$i$ and cell~$i +\nobreak 1$ (in accordance 
with the cellular automata paradigm). It turns out that we need to distinguish 
three cases:
\begin{list}
	{\textbf{Case~\arabic{enumi}:}}
	{\usecounter{enumi}\setlength{\leftmargin}{0pt}%
	 \setlength{\labelwidth}{-\labelsep}}
\item cell~$i$ and queue~$i$ are both empty. In this case, no vehicle can 
	enter the roundabout from queue~$i$ and cell~$i +\nobreak 1$ will be empty 
	the next time step. If a new vehicle arrives to queue~$i$ (which happens 
	with probability~$p_i$), then queue~$i$ will have length~1 at the next 
	time step. Otherwise, it will have length zero.
\item cell~$i$ is empty and queue~$i$ is not empty. Then, the vehicle at the 
	front of queue~$i$ will enter the roundabout and move on to cell~$i 
	+\nobreak 1$ in the process. Thus, cell~$i +\nobreak 1$ will be in 
	state~$i$ one unit of time later. Queue~$i$ either remains at its current 
	length (if a new vehicle arrives to queue~$i$), or its length decreases by 
	one.
\item cell~$i$ is occupied by a type-$j$ vehicle. In this case, queue~$i$ is 
	blocked, hence its length will grow by one if a new vehicle arrives, or 
	will stay the same otherwise. The vehicle in cell~$i$ will either leave 
	the system with probability~$q_{ij}$, in which case cell~$i +\nobreak 1$ 
	will be empty one unit of time later, or the vehicle remains in the 
	system, so that cell~$i +\nobreak 1$ will be in state~$j$ at the next time 
	step.
\end{list}

Having specified the precise dynamics of the model, we observe that if for 
type-$j$ vehicles, $q_{ij} = 0$ for each~$i \in \{1,\ldots,L\}$, then such 
vehicles cannot leave the network. We therefore require that $\prod_{\ell=1}^L 
(1-q_{\ell j}) < 1$ for each~$j \in \{1,\ldots,L\}$, so that any vehicle 
eventually leaves the network. Additionally, note that when $p_i = 1$ for some 
$i \in \{1,\ldots,L\}$, then queue~$i$ can never decrease in length. Since our 
focus is on stability, we impose $p_i \in [0,1)$ for all~$i \in 
\{1,\ldots,L\}$.

\section{Stability condition and main result}
\label{sec: preliminaries}

The model under consideration is a discrete-time Markov chain, which we denote 
by $X^1(\cdot) = \{X^1(t) \colon t \in \Z_+ \}$. The associated state space is 
$\cS^1 := (\Z_+ \times \{0,\ldots,L\})^L$, the set of vectors in~$\R^{2L}$ 
that describe the state of each cell and queue. We observe that $X^1(\cdot)$ 
is irreducible on the set of states that can be reached from the empty state, 
since from any state, with strictly positive probability, we can empty the 
system in a finite number of steps. Note that specific choices of the $p_i$ 
and~$q_{ij}$ can make it impossible to reach all states in the state space. In 
addition, $X^1(\cdot)$ is aperiodic as we can remain in the empty state for an 
arbitrary (finite) number of time steps, with strictly positive probability.

Following \cite{bramson, dai, MT2012}, we say that $X^1(\cdot)$ is 
\textit{stable} when it is positive recurrent. In Section~\ref{sec: marg stat 
dist}, we will derive an explicit expression for the marginal stationary 
distribution of the states of the cells under the assumption of stability. 
This result was already obtained in~\cite{storm2020}, but here we use a 
different method based on the system's offered load. To be more precise, we 
show that when the system is stable, the marginal stationary probability that 
cell~$i$ is in state~$j$ is equal to $p_j$ times the expected number of times 
that a vehicle of type~$j$ will occupy cell~$i$ during the time it spends on 
the roundabout. This connection leads us to formulate a necessary and 
sufficient condition for stability in Section~\ref{sec: stability condition}, 
which is our main result. Moreover, this connection plays an important role in 
our proof that the condition is sufficient in Section~\ref{sec: proof main 
result}. We prove the necessity of the condition for stability at the end of 
Section~\ref{sec: stability condition}.

\subsection{Marginal stationary distribution}
\label{sec: marg stat dist}

In our formulation of the model given above, we update the system at each time 
step by checking for external arrivals, and deciding for each vehicle on the 
roundabout whether or not it leaves. As a consequence of the dynamics, the 
time spent on the roundabout by each vehicle that enters the roundabout at 
cell~$j$ is an independent copy of a generic random variable~$T_j$, the 
distribution of which is completely determined by the parameters~$q_{ij}$. We 
use this fact in this subsection to derive the marginal stationary 
distribution of the states of the cells when the system is stable, and again 
in Section~\ref{sec: proof main result} to prove our main result.

To be precise, $T_j$ is a random variable with distribution given by
\begin{equation}\label{Eqn: T_j}
\begin{cases}
	\PP(T_j = 0) = 0; \\[5pt]
	\PP(T_j = k) = \prod_{\ell = j+1}^{j+k-1} (1-q_{\ell j})\, q_{j+k,j}
	&\text{for $k\geq 1$},
\end{cases}
\end{equation}
where we adopt the convention that the empty product is equal to~1, and define 
$q_{\ell j}$ for $\ell>L$ by setting $q_{\ell j} := q_{ij}$ whenever $\ell 
\equiv i \pmod{L}$. Observe that the model dynamics determine the distribution 
of the~$T_j$ and that different distributions could be considered as well. 
Such model extensions do not significantly impact our analysis and are 
discussed in Section~\ref{subsec: model extensions}.

Observe that the number of times a vehicle of type~$j$ that spends time~$T_j$ 
on the roundabout will occupy cell~$i$ before leaving, is also a well-defined 
random variable. We denote it by~$N_{ij}$, and we set $b_{ij} := \E N_{ij}$. 
In our model, $b_{ij}$ is the expected number of visits to cell~$i$ by a 
vehicle that moves onto the roundabout from the on-ramp at cell~$j$. We next 
define, for $i\in \{1,2, \dots, L\}$,
\begin{equation}
	\label{Eqn: pi_ij}
	\pi_{ij} := \begin{cases}
		b_{ij} \, p_j & \text{if $j \in \{1,2,\dots,L\}$}; \\
		1 - \sum_{j=1}^L b_{ij} \, p_j & \text{if $j=0$}.
	\end{cases}
\end{equation}
The numbers~$\pi_{ij}$ have an important interpretation in case the model is 
stable:

\begin{proposition}[Marginal stationary distribution]
	\label{Prop: marginal distribution}
	If the model is stable, then the marginal stationary probability that 
	cell~$i$ is in state~$j$ is the number~$\pi_{ij}$ defined by~\eqref{Eqn: 
	pi_ij}.
\end{proposition}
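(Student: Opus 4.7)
The plan is to compute the marginal stationary distribution by evaluating a long-run time average and identifying it with the stationary probability via the ergodic theorem. For $j \in \{1,\ldots,L\}$ I want to show that the long-run fraction of time cell~$i$ is in state~$j$ equals $p_j\,b_{ij}$; this gives $\mu(\text{cell }i = j) = \pi_{ij}$, where $\mu$ denotes the unique stationary distribution of $X^1(\cdot)$ that exists by positive recurrence together with the irreducibility and aperiodicity noted at the start of Section~\ref{sec: preliminaries}. The value $\pi_{i0}$ then follows by complementation, since the $L+1$ possible cell states partition the sample space.

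The first step is to identify the throughput of type-$j$ vehicles onto the ring. Let $A_j(t)$ count exogenous arrivals to queue~$j$ and $D_j(t)$ count departures from queue~$j$ onto the ring, so that $Q_j(t) = Q_j(0) + A_j(t) - D_j(t)$. The SLLN for Bernoulli inputs gives $A_j(t)/t \to p_j$ almost surely. Positive recurrence of $X^1(\cdot)$, together with the fact that the empty state is always reachable, implies that the empty state is visited infinitely often with finite mean return time; so $Q_j$ resets to $0$ along a sequence $\tau_n$ with $\tau_n / n$ converging to a finite constant, and the crude bound $Q_j(t) \leq \tau_{n+1} - \tau_n$ for $\tau_n \leq t < \tau_{n+1}$ forces $Q_j(t)/t \to 0$ almost surely. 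Combining the two limits yields $D_j(t)/t \to p_j$ almost surely.

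The second step counts visits to cell~$i$ per vehicle. Labeling the type-$j$ vehicles that enter the ring in order of entry $k = 1, 2, \ldots$, the independence of the Bernoulli random variables driving the dynamics (made precise by the explicit construction in the appendix) shows that the trajectories of distinct type-$j$ vehicles on the ring are i.i.d., each distributed as that of a generic type-$j$ vehicle whose sojourn is $T_j$ as in~\eqref{Eqn: T_j}. Hence the visit counts $N_{ij}^{(k)}$ to cell~$i$ are i.i.d.\ with mean $b_{ij}$. The total time cell~$i$ is in state~$j$ during $[0,t]$ equals $\sum_{k=1}^{D_j(t)} N_{ij}^{(k)}$ up to an $O(1)$ boundary correction accounting for the vehicles possibly present in cell~$i$ at the endpoints. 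Applying the SLLN for i.i.d.\ sums indexed by a random count, together with $D_j(t)/t \to p_j$, gives a long-run fraction equal to $p_j\,b_{ij}$, and the ergodic theorem for positive recurrent Markov chains identifies this limit with $\mu(\text{cell }i = j)$, as desired.

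The main technical delicacy will be the justification of $Q_j(t)/t \to 0$: positive recurrence does not by itself imply finite stationary mean of the queue length, so one cannot simply apply the ergodic theorem to $Q_j$; instead the excursion argument via returns to the empty state is needed. A secondary point is the clean i.i.d.\ decoupling of the per-vehicle trajectories, which depends on the independence assumptions stated in Section~\ref{sec:model} and formalised by the explicit construction; the rest of the argument is a fairly routine throughput-and-mean-sojourn computation in the style of Little's law.
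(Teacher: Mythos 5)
Your proposal is correct and follows essentially the same route as the paper's proof: identify the long-run throughput of type-$j$ vehicles onto the ring as $p_j$ via the SLLN, multiply by the mean visit count $b_{ij}$, and invoke the ergodic theorem to identify the long-run fraction with the stationary probability. You supply more detail than the paper (notably the excursion argument justifying $Q_j(t)/t\to 0$, which the paper leaves implicit in the phrase ``the long-term rate at which type-$j$ vehicles move onto the roundabout must be equal to the rate at which such vehicles arrive''), but the argument is the same in structure.
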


\begin{proof}
	Suppose that the model is stable. Then the long-term rate at which 
	type-$j$ vehicles move onto the roundabout must be equal to the rate at 
	which such vehicles arrive, which is~$p_j$ by the strong law of large 
	numbers. Again by the strong law of large numbers, it follows that the 
	long-term fraction of time cell~$i$ is occupied by a type-$j$ vehicle 
	is~$p_j\, b_{ij}$. By the ergodic theorem, this number is also the 
	marginal stationary probability that cell~$i$ is in state~$j$.
\end{proof}

As we show below, for our model the numbers~$b_{ij}$ can be expressed 
explicitly in terms of the parameters~$q_{ij}$ by the following formula:
\begin{equation}
	\label{Eqn: b_ij}
	b_{ij}
	= \begin{cases}
		\displaystyle
		\frac{\prod_{\ell=j+1}^{i+L-1} (1-q_{\ell j})}
			{1 - \prod_{\ell=1}^L (1-q_{\ell j})}
		& \quad\text{if $1\leq i\leq j\leq L$}; \\[18pt]
		\displaystyle
		\frac{\prod_{\ell=j+1}^{i-1} (1-q_{\ell j})}
			{1 - \prod_{\ell=1}^L (1-q_{\ell j})}
		& \quad\text{if $1\leq j < i\leq L$}.
	\end{cases}
\end{equation}
For example, if $L=1$ we have $b_{11} = q_{11}^{-1}$ because the time a 
vehicle spends on the roundabout follows a geometric distribution with 
parameter~$q_{11}$. For $L=2$, formula~\eqref{Eqn: b_ij} gives
\[
	b_{12} = \frac{1}{1-(1-q_{12})(1-q_{22})}
	\quad \text{and} \quad
	b_{22} = \frac{1-q_{12}}{1-(1-q_{12})(1-q_{22})}
\]
for vehicles of type~2. This is explained as follows. Note that a vehicle of 
type~2 first enters cell~1 when it moves onto the roundabout. The vehicle's 
return probability to cell~1 is $(1-q_{12})(1-q_{22})$, hence the above 
expression for~$b_{12}$ is indeed the expected number of visits to cell~1. The 
expected number of visits to cell~2 is then $(1-q_{12}) \, b_{12}$ because 
$1-q_{12}$ is the probability the vehicle moves on to cell~2 when it occupies 
cell~1. We now prove~\eqref{Eqn: b_ij} in the general case:

\begin{lemma}
	\label{Lemma: EN_ij}
	For all $i,j\in \{1,\dots,L\}$ we have that $b_{ij} = \E N_{ij}$ is given 
	by~\eqref{Eqn: b_ij} if $T_j$ has the distribution specified 
	by~\eqref{Eqn: T_j}.
\end{lemma}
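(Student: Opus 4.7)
The plan is to compute $\E N_{ij}$ directly by writing it as a sum over time steps of the probability that the vehicle occupies cell~$i$ at that step, and then to identify this with the explicit formula \eqref{Eqn: b_ij}. Concretely, by Case~2 of the dynamics, a type-$j$ vehicle enters the roundabout at cell~$j+1$ at some time which I call step~$1$. Thereafter, if the vehicle is still on the roundabout at step~$k$, it occupies cell~$j+k$ (indices mod~$L$, using the convention $q_{\ell j} := q_{ij}$ for $\ell \equiv i \pmod L$). By Case~3 and the independence assumption, the probability it is still present at step~$k$ equals $s_k := \prod_{\ell=j+1}^{j+k-1}(1-q_{\ell j})$, with $s_1 := 1$ and using the empty-product convention.

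Writing $N_{ij} = \sum_{k \geq 1} \one{\text{vehicle occupies cell~}i\text{ at step~}k}$ and taking expectations, I obtain
\begin{equation*}
	\E N_{ij} = \sum_{\substack{k\geq 1 \\ j+k \equiv i\,(L)}} s_k.
\end{equation*}
Let $k_0$ be the smallest positive integer with $j+k_0 \equiv i \pmod L$; that is, $k_0 = i-j$ when $j<i\leq L$ and $k_0 = i-j+L$ when $1\leq i\leq j$. The index set of the sum is then $\{k_0, k_0+L, k_0+2L, \dots\}$. Setting $r := \prod_{\ell=1}^L (1-q_{\ell j})$ (the survival probability over a full loop), the multiplicative structure of $s_k$ together with the cyclic convention on the $q_{\ell j}$ gives $s_{k_0 + mL} = s_{k_0}\, r^m$ for all $m\geq 0$.

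Summing the resulting geometric series (the assumption $\prod_{\ell=1}^L(1-q_{\ell j})<1$ from Section~\ref{sec:model} guarantees $r<1$, so the series converges) yields
\begin{equation*}
	\E N_{ij} = \frac{s_{k_0}}{1-r}.
\end{equation*}
It remains to match $s_{k_0}$ with the numerators appearing in \eqref{Eqn: b_ij}: when $j<i\leq L$, the product $s_{k_0} = \prod_{\ell=j+1}^{i-1}(1-q_{\ell j})$ is exactly the numerator of the second case, and when $1\leq i\leq j$, $s_{k_0} = \prod_{\ell=j+1}^{i+L-1}(1-q_{\ell j})$ matches the first case. There is no serious obstacle here; the only care needed is in bookkeeping the wrap-around at $\ell=L$ and verifying the base cases $k_0=1$ (which corresponds to $i=j+1$ for $j<L$, or $i=1,j=L$) give the empty-product value~$1$, consistent with the vehicle occupying cell~$i$ immediately on entering the ring.
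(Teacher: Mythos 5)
Your proof is correct and takes essentially the same approach as the paper: both express $\E N_{ij}$ as a sum of survival probabilities $\PP(T_j \geq k)$ taken at arithmetic-progression values $k = k_0, k_0+L, k_0+2L,\dots$, recognize the geometric structure from the cyclic convention on the $q_{\ell j}$, and sum the resulting series. The only cosmetic difference is that you reach this sum via the indicator decomposition $\E N_{ij} = \sum_k \PP(\text{occupies cell }i\text{ at step }k)$, whereas the paper uses the tail-sum identity $\E N_{ij} = \sum_{n\geq 0}\PP(N_{ij}\geq n+1) = \sum_{n\geq 0}\PP(T_j \geq k_0 + nL)$; these coincide immediately.
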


\begin{proof}
	Consider the case $1\leq j < i\leq L$. Note that the probability 
	distribution of~$N_{ij}$ can be expressed in terms of the distribution 
	of~$T_j$ as
	\[
		\PP(N_{ij}\geq n+1) = \PP(T_j \geq i-j+nL),
		\qquad n\geq0.
	\]
	From~\eqref{Eqn: T_j} it follows by induction in~$k$ that
	\[
		\PP(T_j \geq k) = \prod_{\ell=j+1}^{j+k-1} (1-q_{\ell j}),
		\qquad k\geq1.
	\]
	Combining these observations, we obtain
	\[
		\PP(N_{ij}\geq n+1)
		= \prod_{\ell=j+1}^{i-1}(1-q_{\ell j})
			\biggl(\, \prod_{\ell=1}^L (1-q_{\ell j}) \biggr)^n,
	\]
	and summing both sides over all~$n\geq0$ yields formula~\eqref{Eqn: b_ij} 
	for~$b_{ij} = \E N_{ij}$. The proof in the case $1\leq i\leq j\leq L$ is 
	similar.
\end{proof}

\begin{remark}\label{rem: EN_ij}
	We saw in the proof of Proposition~\ref{Prop: marginal distribution} 
	that~$\pi_{ij}$ represents the long-term fraction of time cell~$i$ is 
	occupied by a type-$j$ vehicle when the model is stable. But the 
	quantity~$\pi_{ij}$ is also meaningful if the system is unstable, since it 
	is by definition only a measure of the load imposed on the system. The 
	question of stability deals with whether or not all the vehicles manage to 
	get onto the roundabout. If the system is unstable, the~$\pi_{ij}$ still 
	describe the load that is offered to the system, but we cannot use 
	ergodicity to conclude that~$\pi_{ij}$ is the fraction of time cell~$i$ 
	will be occupied by a vehicle of type~$j$. Our characterization of 
	the~$\pi_{ij}$ in terms of~$\E N_{ij}$ plays an important role in proving 
	the main theorem (Theorem~\ref{Thm: (main) stability of the model} below).
\end{remark}

\subsection{Condition for stability}
\label{sec: stability condition}

Under the assumption that the system is stable, the Markov chain~$X^1(\cdot)$ 
has a stationary distribution~$\pi$. In the previous section we have shown 
that the vector $\{\pi_{i0}, \pi_{i1}, \dots, \pi_{iL}\}$ given by \eqref{Eqn: 
pi_ij} and~\eqref{Eqn: b_ij} is then the marginal stationary distribution of 
cell~$i$, for $i = 1,2,\dots,L$. In particular, $\pi_{i0}$ is the stationary 
rate at which cell~$i$ is empty when the system is stable. Since vehicles 
arrive at cell~$i$ at rate~$p_i$ and can only enter onto the roundabout when 
the cell is empty, it seems reasonable to believe that the system cannot be 
stable if $p_i > \pi_{i0}$ for some cell~$i$. Conversely, one would suspect 
that if $p_i < \pi_{i0}$ for all cells~$i$, the cells will be vacant often 
enough to prevent the queues from blowing up. It turns out that the latter 
condition is in fact not only sufficient, but also necessary for the system to 
be stable:

\begin{theorem}[Main result]
	\label{Thm: (main) stability of the model}
	A necessary and sufficient condition for the roundabout model to be stable 
	(by which we mean that the Markov chain~$X^1(\cdot)$ is positive 
	recurrent) is that
	\[
		p_i < \pi_{i0} = 1 - \sum_{j=1}^L b_{ij} \, p_j
		\qquad \text{for all~$i\in \{1,2,\dots,L\}$}.
	\]
\end{theorem}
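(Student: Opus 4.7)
The proof naturally splits into necessity and sufficiency, with sufficiency being by far the more substantial part. For necessity, I would proceed as follows. Assume $X^1(\cdot)$ is positive recurrent; then by Proposition~\ref{Prop: marginal distribution} the stationary probability that cell~$i$ is empty equals~$\pi_{i0}$. In any time step, a vehicle moves from queue~$i$ onto the ring if and only if queue~$i$ is nonempty and cell~$i$ is empty, so the long-run rate at which queue~$i$ sends vehicles onto the ring equals the stationary probability of this joint event, which is at most~$\pi_{i0}$. By conservation of mass at a positive recurrent queue, together with the strong law of large numbers for the i.i.d.\ arrival stream, this rate must equal the arrival rate~$p_i$. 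Therefore $p_i\leq\pi_{i0}$. Strictness follows because the fully empty state has positive stationary mass, so with positive probability queue~$i$ and cell~$i$ are simultaneously empty, strictly reducing the joint probability below~$\pi_{i0}$.

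For sufficiency, the plan is the fluid-model strategy advertised in Section~\ref{sec: intro}. First I would carry out the coupling of Section~\ref{sec: multiclass network description}, identifying $X^1(\cdot)$ pathwise with the queue-length process of a conventional multiclass queueing network through a bijection on state spaces. The coupling is designed precisely so that the Dai--Bramson reduction~\cite{dai,bramson} applies: positive recurrence of $X^1(\cdot)$ then follows once every fluid limit of the associated fluid model drains to zero in finite time from every initial condition of unit norm. The remaining task is thus to prove this fluid stability under the hypothesis $p_i<\pi_{i0}$ for all~$i$.

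Fluid stability is where the main obstacle lies. Rewriting the hypothesis as $p_i+\sum_{j=1}^L b_{ij}p_j<1$ for every~$i$, the coefficients $b_{ij}=\E N_{ij}$ act as routing multipliers in the induced fluid traffic equations, so the condition states that every station is strictly subcritically loaded. As recalled in Section~\ref{sec: intro}, however, subcritical loads are in general \emph{not} sufficient in multiclass networks with priorities, so a tailored argument is required. I would therefore follow the approach of Tassiulas~\cite{tassiulas1996}, adapted to our setting, by exploiting two features specific to the ring. First, the cyclic topology forces work to circulate and rules out purely local bottleneck accumulations. Second, the identity $\pi_{i0}=1-\sum_{j=1}^L b_{ij}p_j$ highlighted in Remark~\ref{rem: EN_ij} provides a clean interpretation of the drainage rate available to queue~$i$ in the fluid picture, matching precisely the slack in the stability condition. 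The technical step I expect to be hardest is the construction of a suitable Lyapunov-type functional---probably a weighted sum of fluid queue-lengths with weights derived from the~$b_{ij}$, or a piecewise-linear functional in the spirit of~\cite{tassiulas1996}---together with the verification that its drift is uniformly negative whenever the fluid state is nonzero.
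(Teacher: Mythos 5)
Your overall architecture matches the paper's: necessity via conservation of flow plus positive mass at the empty state (this is essentially Proposition~\ref{Prop: necessity main theorem}, which formalizes the same idea by writing a one-step recursion for $Q_i(t)$, averaging, and invoking the ergodic theorem with $\pi_\varnothing>0$ giving the strict inequality), and sufficiency via the coupling of Lemma~\ref{Lemma: coupling}, the fluid-limit reduction (Proposition~\ref{Prop: original Markov chain pos rec if fluid limit model is stable}), and a Tassiulas-style argument for fluid stability. You also correctly anticipate the role of the $b_{ij}$-weighted residual work $\bR_i(t)=\bQ_{i0}(t)+\sum_j b_{ij}\bQ_{j0}(t)$ as the central quantity.

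However, there is a genuine missing idea in the fluid-stability step, and the specific plan you sketch would not go through as stated. The functional $\max_i \bR_i(t)$ does \emph{not} have uniformly negative drift whenever $\bQ(t)\neq 0$: equation~\eqref{Eqn: change in R_i} shows $\bR_i$ decreases at rate $1-\rho_i$ only while station~$i$ is busy, and it can increase when station~$i$ idles even though other stations are loaded, so no station-by-station Lyapunov drift bound is available. What the paper actually uses is the \emph{circularity bound} of Lemma~\ref{Lemma: circularity bound R_i}: if $\bQ_{i0}(t)=0$, then $\bR_{i-1}(t)\ge \bR_i(t)$. This is the precise way the ring topology is exploited, and it makes the argument qualitatively different from a drift argument. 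The proof of Theorem~\ref{Thm: stability fluid limit model} then traces a backward-in-time chain $t_0>t_1>\dots$ of times and a chain of stations $i_0,i_1,\dots$ moving backwards around the ring, alternating between a busy interval over which $\bR_{i_n}$ decreases at rate at least $1-\bar\rho$ (from~\eqref{Eqn: R_i decreases}) and a jump to the preceding nonempty station at a time when $\bQ_{i_n0}$ hits zero (using Lemma~\ref{Lemma: circularity bound R_i} to control $\bR$ across the jump). A separate Lipschitz argument then shows this chain must reach time~$0$ in at most $L$ jumps per $\tau$-length of time, and combining with the uniform bound $\max_i\bR_i(0)\le 1+B$ yields the finite drainage time $\delta$ demanded by Definition~\ref{Def: fluid model stability}. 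You would need to discover both the circularity lemma and this chain construction; ``verify uniformly negative drift'' understates what is required and, taken literally, is a step that would fail.
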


For example, in the case $L=1$, the expected time a vehicle occupies the cell 
is $b_{11} = q_{11}^{-1}$, as we saw in Section~\ref{sec: marg stat dist}. It 
follows that the expected time that elapses after a vehicle enters the 
roundabout until the next vehicle can enter is $1+q_{11}^{-1}$, because the 
cell has to be empty for at least one unit of time before it can be occupied 
again. Hence, for the system to be stable, the arrival rate~$p_1$ must be 
smaller than $1 / \bigl( 1+q_{11}^{-1} \bigr)$, which is precisely what the 
stability condition of Theorem~\ref{Thm: (main) stability of the model} says. 
Now consider the case~$L=2$. Suppose we fix the arrival rate~$p_2$. Then 
vehicles of type~2 already lay a claim on cell~1 for a fraction~$b_{12} \, 
p_2$ of the time. During the remaining fraction of time, like in the case 
$L=1$, the maximum rate at which cell~1 can be empty if we also allow type-1 
vehicles onto the roundabout is $(1+b_{11})^{-1}$. This leads to the stability 
condition $p_1 < (1-b_{12} \, p_2) (1+b_{11})^{-1}$, and an analogous 
condition for~$p_2$ can be derived by reversing the roles of the type-1 and 
type-2 vehicles. The stability region consists of the pairs~$(p_1, p_2)$ that 
satisfy both inequalities (see Figure~\ref{fig: stable region}). In general, 
for any~$L$, the stability condition is a system of linear inequalities for 
the arrival rates~$p_i$, and hence the stability region is a convex polytope 
contained in~$[0,1)^L$ with the zero vector as one of its corners.

\begin{figure}
	\begin{center}
		\includegraphics{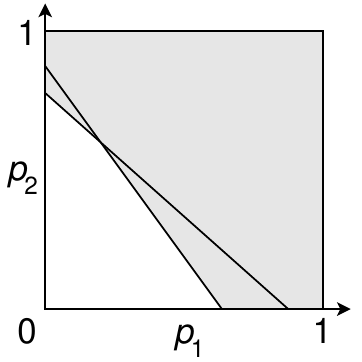}
	\end{center}
	\caption{The stability region (unshaded area) for $L=2$, $q_{11} = q_{12} 
	= 0.75$, $q_{21} = q_{22} = 0.5$.}
	\label{fig: stable region}
\end{figure}

The hard part of the theorem is to prove that the stated condition is indeed 
sufficient for the stability of the system. We close this section with a proof 
of the necessity of the stability condition; Sections \ref{sec: multiclass 
network description} and~\ref{sec: proof main result} are devoted to proving 
the sufficiency. One may also wonder about transience in case the stability 
condition does not hold; this is discussed in Section~\ref{sec: conclusions}.

\begin{proposition}\label{Prop: necessity main theorem}
	The condition that $p_i < \pi_{i0}$ for all~$i\in \{1,2,\dots,L\}$ is a 
	necessary condition for the stability of the roundabout model.
\end{proposition}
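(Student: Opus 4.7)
The plan is to assume that $X^1(\cdot)$ is positive recurrent with stationary distribution~$\pi$, and then derive $p_i < \pi_{i0}$ directly from a conservation argument, using Proposition~\ref{Prop: marginal distribution} to identify $\pi_{i0}$ as the stationary rate at which cell~$i$ is empty.

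The first step is to rewrite $p_i$ as a stationary rate of an observable event. By the dynamics, a vehicle moves from queue~$i$ onto the roundabout at time~$t+1$ if and only if cell~$i$ is empty and queue~$i$ is nonempty at time~$t$. Hence, by the ergodic theorem applied to the stable Markov chain, the long-term rate at which type-$i$ vehicles enter the roundabout equals
\[
	\PP_\pi(\text{cell } i \text{ is empty and queue } i \text{ is nonempty}).
\]
On the other hand, arrivals to queue~$i$ occur at rate~$p_i$ by the strong law of large numbers, and stability forces the long-term rate in and out of queue~$i$ to agree (otherwise the queue length would diverge). Combining these observations,
\[
	p_i
	= \PP_\pi(\text{cell } i \text{ empty, queue } i \text{ nonempty})
	= \pi_{i0} - \PP_\pi(\text{cell } i \text{ empty, queue } i \text{ empty}).
\]

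To upgrade this inequality to a strict one, I would argue that the event (cell~$i$ empty, queue~$i$ empty) has strictly positive stationary probability, because it contains the fully empty state. As observed at the start of Section~\ref{sec: preliminaries}, $X^1(\cdot)$ is irreducible and aperiodic on the set of states reachable from the empty state, and the empty state itself is reachable from every such state in finitely many steps with positive probability. Under the positive recurrence hypothesis, the empty state therefore carries strictly positive stationary mass, which yields $\PP_\pi(\text{cell } i \text{ empty, queue } i \text{ empty}) > 0$ and hence $p_i < \pi_{i0}$.

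The only technical point that requires some care is step one: rigorously passing from the stability of the chain to the identity between the arrival rate~$p_i$ and the stationary fraction of time steps at which queue~$i$ sends a vehicle. This is standard (it is a cycle-level bookkeeping between Bernoulli($p_i$) inputs and the number of admissions per regenerative cycle), but is the only place where positive recurrence is actually used in a nontrivial way; everything else is a short deterministic computation together with the already-established formula $\pi_{i0} = 1 - \sum_j b_{ij}\,p_j$ from Proposition~\ref{Prop: marginal distribution}.
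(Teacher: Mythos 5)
Your argument is correct and is essentially the same conservation argument the paper uses: the paper's equation \eqref{Eqn: Q_i(n)/n} is precisely the time-averaged form of your flow-balance identity $p_i = \PP_\pi(\text{cell }i\text{ empty, queue }i\text{ nonempty})$, and both proofs extract the strict inequality from the observation that the fully empty state (contained in $\{Q_i = C_i = 0\}$) carries strictly positive stationary mass $\pi_\varnothing$. The ``technical point'' you flag at the end is exactly what the paper's Cesàro-average computation handles, by combining the ergodic theorem for the bounded increments of $Q_i$ with the fact that $\liminf_n Q_i(n)/n = 0$ a.s.\ under positive recurrence, so your sketch and the paper's proof fill that gap in the same way.
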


\begin{proof}
	We know that the Markov chain~$X^1(\cdot)$ is aperiodic and irreducible. 
	Assume furthermore that the system is stable, that is, that $X^1(\cdot)$ 
	is also positive recurrent. Then $X^1(\cdot)$ has a stationary 
	distribution~$\pi$, and since the Markov chain is irreducible, the state 
	in which every cell and every queue is empty must have a strictly positive 
	probability, which we denote by~$\pi_{\varnothing}$, under the stationary 
	distribution.

	Suppose that we start our Markov chain from the stationary 
	distribution~$\pi$. Let $C_i(t)$ denote the state of cell~$i$ at time~$t$, 
	and let $Q_i(t)$ be the length of queue~$i$ at time~$t$. Write $A_i(t)$ 
	for the event that a new vehicle arrives at cell~$i$ at time~$t$. Then the 
	queue length process satisfies
	\[\begin{split}
		Q_i(t+1)
		&= Q_i(t) + \1_{\{Q_i(t)>0\}} \bigl( \1_{A_i(t+1)} - \1_{\{C_i(t)=0\}} 
		\bigr)  + \1_{\{Q_i(t)=0\}} \1_{A_i(t+1)} \\
		&= Q_i(t) + \1_{A_i(t+1)} - \1_{\{C_i(t)=0\}} + \1_{\{Q_i(t) = C_i(t) 
		= 0\}},
	\end{split}\]
	from which it follows that
	\begin{equation}\label{Eqn: Q_i(n)/n}
		\frac1n Q_i(n)
		= \frac1n Q_i(0) + \frac1n \sum_{t=0}^{n-1} \1_{A_i(t+1)}
		- \frac1n \sum_{t=0}^{n-1} \1_{\{C_i(t)=0\}} + \frac1n 
		\sum_{t=0}^{n-1} \1_{\{Q_i(t) = C_i(t) = 0\}}.
	\end{equation}

	We now want to take $n\to\infty$ in~\eqref{Eqn: Q_i(n)/n}. On the one 
	hand, since we assumed the model is stable, it is clear that we must have 
	that
	\[
		\liminf_{n\to\infty} \frac1n Q_i(n) = 0 \qquad \text{a.s.}
	\]
	On the other hand, the first term on the right hand side in~\eqref{Eqn: 
	Q_i(n)/n} clearly converges almost surely to~0 as $n\to\infty$, whereas 
	the second term converges a.s.\ to~$p_i$ by the strong law of large 
	numbers, and the third term converges a.s.\ to~$-\pi_{i0}$ by the ergodic 
	theorem. As for the last term in~\eqref{Eqn: Q_i(n)/n}, we observe that 
	the event that the system is completely empty is a subset of the event 
	$\{Q_i(t) = C_i(t) = 0\}$, so that by the ergodic theorem we can conclude 
	that
	\[
		\liminf_{n\to\infty} \frac1n \sum_{t=0}^{n-1} \1_{\{Q_i(t)=C_i(t)=0\}} 
		\geq \pi_{\varnothing} > 0.
	\]
	Combining these observations, it follows that almost surely,
	\[
		0
		= \liminf_{n\to\infty} \frac{1}{n} Q_i(n)
		\geq 0 + p_i - \pi_{i0} + \pi_{\varnothing}
		> p_i - \pi_{i0}.
	\]
	Hence, $p_i$ must be strictly smaller than~$\pi_{i0}$ for every cell~$i$ 
	if the roundabout model is stable.
\end{proof}

\section{Multiclass queueing network formulation}
\label{sec: multiclass network description}

To prove sufficiency of the stability condition in Theorem~\ref{Thm: (main) 
stability of the model}, we formulate in this section a multiclass network 
(explained below) that has essentially the same dynamics as the roundabout 
model from Section~\ref{sec:model}. That is, for each choice of parameters 
$p_i$ and~$q_{ij}$, $i,j \in \{1,\dots,L\}$, for the roundabout model we 
define an analogous multiclass network, using the same parameters to define 
the appropriate exogenous arrival and routing processes in the network. We 
shall refer to a choice of these parameters as a \textit{parameter setting}. 
The two model formulations can be coupled on a sample-path level, formalized 
in Lemma~\ref{Lemma: coupling} below. This allows us in Section~\ref{sec: 
proof main result} to utilize the powerful fluid model framework for 
multiclass queueing networks to prove stability of the multiclass-network 
formulation of our model, hence proving it for the model in the original 
formulation as well.

A \textit{multiclass queueing network}, or simply \textit{multiclass network}, 
is a network consisting of a finite collection of (single server) stations 
serving customers from a finite number of customer classes. Each customer 
class has its own queue at one of the stations, with its own exogenous arrival 
process and service time distribution. After service completion, customers are 
either routed to another queue (and associated customer class), or depart from 
the network. In addition, a multiclass network can have other characteristics 
relating to specific model applications, such as customer class priorities in 
stations and blocking features.

Fix a parameter setting. To map the roundabout model to a multiclass 
network, we identify each cell~$i$ with a station~$i$ in the multiclass 
network. Vehicles that arrive from outside to the on-ramp of cell~$i$ become 
customers of class~$(i,0)$ in the multiclass network, and vehicles of type~$j$ 
that occupy cell~$i$ become class~$(i,j)$ customers. Thus, there are 
$L$~stations and $L(L +\nobreak 1) = L^2 +\nobreak L$ customer classes in the 
multiclass network. The set of customer classes is
\[
	\cK := \bigl\{ (i,j) \colon i \in \{1,\ldots,L\}, j\in\{0,\ldots,L\} 
	\bigr\},
\]
and station~$i$ serves the customers of all classes~$(i,j)$ with $j \in 
\{0,\dots,L\}$. We will now define the exogenous arrival processes, service 
time distributions, routing policy, and priority structure that will make the 
multiclass network mimic the dynamics of the roundabout model. As an aid to 
the reader, Figure~\ref{fig: Network} illustrates the multiclass network and 
its routing policy.

\begin{figure}
	\centering
	\includegraphics{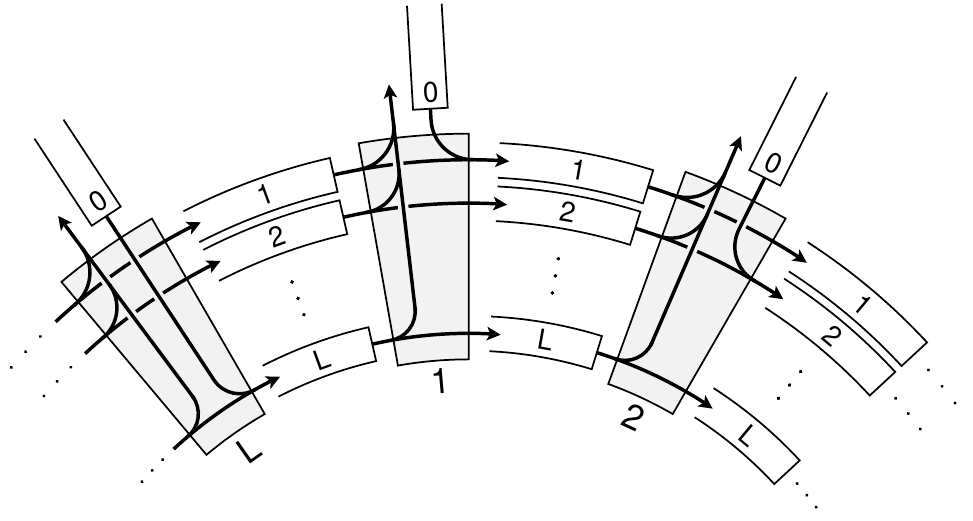}
	\caption{The multiclass queueing network. The shaded boxes are the 
	stations and the queues are labeled with the corresponding vehicle type 
	(using~$0$ for external arrivals). The arrows show how customers can be 
	routed through the network from the $L+1$ queues at each station.}
	\label{fig: Network}
\end{figure}

First, to account for the external arrivals, the exogenous arrival process for 
class~$(i,0)$ customers clearly must have independent, geometrically 
distributed inter-arrival times with parameter~$p_i$. All other customer 
classes~$(i,j)$ with $i,j \in \{1,\dots,L\}$ have no exogenous arrivals; such 
customers can appear in the network only because they arrive as a 
class~$(j,0)$ customer and are subsequently routed in the network to become a 
class~$(i,j)$ customer. This corresponds in the roundabout model to a vehicle 
entering onto the roundabout at cell~$j$, and then traversing a number of 
cells to reach cell~$i$.

To mimic this behavior correctly in the multiclass network, we impose that the 
service times are exactly one unit of time for each class of customer. 
Furthermore, after completing service at station~$i$, a class~$(i,0)$ customer 
is routed to the next station and becomes a customer of class $( (i \bmod 
L)+\nobreak 1, i)$. Likewise, a class~$(i,j)$ customer is either routed to the 
next station with probability~$1-\nobreak q_{ij}$, or leaves the network. This 
routing policy is captured by the routing matrix~$P$. The rows and columns of 
this matrix are indexed by the set of customer classes~$\cK$, and the 
entry~$P_{ij, k\ell}$ defines the probability that a class~$(i,j)$ customer is 
routed to become a class~$(k,\ell)$ customer after service completion, and is 
therefore given by
\begin{equation}\label{Eqn: routing matrix}
	P_{ij,k\ell} = \begin{cases}
		1			& \text{if $j=0$, $k = (i \bmod L)+1$, $\ell=i$;} \\
		1-q_{ij}	& \text{if $j\neq0$, $k = (i \bmod L)+1$, $\ell=j$;} \\
		0			& \text{otherwise.}
	\end{cases}
\end{equation}

Next, we need to account for the fact that in the roundabout model, any 
vehicle that occupies a cell blocks the corresponding on-ramp and is 
guaranteed to move out of that cell after one unit of time. We therefore 
impose the following priority structure on the multiclass network: for each~$i 
\in \{1,\dots,L\}$, customers of classes~$(i,j)$ with $j \in \{1,\dots,L\}$ 
have priority over class~$(i,0)$ customers. This means that class~$(i,0)$ 
customers only receive service at station~$i$ if the queues associated with 
class~$(i,j)$ customers with $j\neq 0$ are all empty.

Finally, we need to consider the restriction in the roundabout model that each 
cell can be occupied by at most one vehicle at a time. For the multiclass 
network this means that we have to guarantee that, for each station~$i$, the 
combined number of customers of classes~$(i,j)$ with $j\neq 0$ is at most~1 at 
all times. But we can achieve this by simply imposing this condition on the 
initial state of the network: if at time~0, for each station~$i$, the combined 
number of customers of classes~$(i,j)$ with $j \in \{1,\dots,L\}$ is at 
most~1, then the same is automatically true for all times~$t \in\nobreak 
\Z_+$, because all service times are exactly~1, class~$(i,j)$ customers with 
$j\neq0$ have priority over class~$(i,0)$ customers, and customers can only be 
routed to one of the queues at station~$i$ by the previous station in the 
network. Henceforth, we only consider the multiclass network under this 
condition on the initial state.

It is well known (see, e.g., \cite[Section~2] {dai} or \cite[Section~4.1] 
{bramson}) that a multiclass network can be represented as a Markov chain. In 
our case, due to the geometric inter-arrival times, deterministic service 
times and discrete-time nature of the network, we can represent it as a 
discrete-time Markov chain with a discrete state space. We denote this Markov 
chain by~$X^2(\cdot)$, and take as our state space $\cS^2 := (\Z_+ \times 
\mathcal{V})^L$, where $\mathcal{V} := \{ x \in \{0,1\}^L \colon \sum_{k} x_k 
\leq 1 \}$. Thus, each state is a vector in~$\R^{ \scriptscriptstyle L^2 + L}$ 
that tells us, for every customer class, how many of those customers are 
present in the network. Because of the way we constructed the multiclass 
network, there is a natural bijection between the two state spaces $\cS^1$ 
and~$\cS^2$, and it is possible to couple the two Markov chains $X^1(\cdot)$ 
and~$X^2(\cdot)$ in such a way that they follow the same sample paths up to 
this bijection. We formalize this claim in the following lemma, the proof of 
which is given for completeness in Appendix~\ref{sec: appendix equality markov 
chains}:

\begin{lemma}
	\label{Lemma: coupling}
	For any fixed parameter setting, there exists a coupling of $X^1(\cdot)$ 
	and~$X^2(\cdot)$ and a bijection~$f \colon \cS^1 \to \cS^2$ such that for 
	every~$\omega \in \Omega$, if $f\bigl( X^1(\omega,0) \bigr) = 
	X^2(\omega,0)$, then
	\[
		f\bigl( X^1(\omega,t) \bigr) = X^2(\omega,t)
		\qquad\text{for all~$t \in \Z_+$.}
	\]
\end{lemma}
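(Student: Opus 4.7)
The plan is to build an explicit bijection $f\colon \cS^1\to\cS^2$ together with a sample-path coupling driven by common Bernoulli random variables, and then to verify by induction on $t$ that the dynamics of the two chains agree through $f$ at every step. The bijection $f$ is the obvious one: given a roundabout state with queue lengths $Q_i$ and cell states $C_i$, I let the $f$-image put $Q_i$ customers of class $(i,0)$ at station~$i$ and, if $C_i = j \in \{1,\dots,L\}$, exactly one class $(i,j)$ customer at station~$i$ (and no other priority customers at station~$i$ when $C_i=0$). Because each station in $\cS^2$ holds at most one priority customer by construction of the multiclass network, this is a bijection onto $\cS^2$.

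For the coupling, the driving randomness at time step $t\to t+1$ consists of independent Bernoulli random variables $A_i(t+1)\sim\mathrm{Ber}(p_i)$ for each $i$, and $D_{ij}(t+1)\sim\mathrm{Ber}(q_{ij})$ for each $i,j$, all mutually independent and independent of the past. In the roundabout, $A_i(t+1)$ governs whether a new vehicle arrives at queue~$i$, and $D_{ij}(t+1)$ governs whether a type-$j$ vehicle that occupies cell~$i$ departs at that step. In the multiclass network I use the same $A_i(t+1)$ to drive the exogenous arrival process for class $(i,0)$ (so that inter-arrival times are indeed i.i.d.\ geometric with parameter $p_i$), and I use $D_{ij}(t+1)$ to determine the routing of the unique class-$(i,j)$ customer, if any, that receives service during $[t,t+1]$: it leaves the network if $D_{ij}(t+1)=1$ and is otherwise routed to class $((i\bmod L)+1, j)$, matching the routing matrix~\eqref{Eqn: routing matrix}. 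If no class-$(i,j)$ customer with $j\neq0$ is present at station~$i$ and $Q_i>0$, then a class-$(i,0)$ customer is served and routed to class $((i\bmod L)+1,i)$.

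The inductive step reduces to checking that, assuming $f(X^1(t))=X^2(t)$, the update of queue~$i$ and cell~$i+1$ in the roundabout corresponds exactly, under $f$, to the combined effect of the arrival $A_i(t+1)$ at station~$i$ and the routing at station~$i$ into station~$i+1$. I would split according to the three cases used in Section~\ref{sec:model}. In Case~1 no priority customer is at station~$i$ and $Q_i=0$, so no customer is routed into station~$i+1$ and only $A_i(t+1)$ updates the class-$(i,0)$ queue; this matches $C_{i+1}(t+1)=0$ and the queue update of the roundabout. In Case~2 a class-$(i,0)$ customer is served and becomes a class-$(i+1,i)$ customer, producing $C_{i+1}(t+1)=i$. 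In Case~3 the unique class-$(i,j)$ customer is served and either leaves (when $D_{ij}(t+1)=1$, giving $C_{i+1}(t+1)=0$) or is routed to class-$(i+1,j)$ (giving $C_{i+1}(t+1)=j$); meanwhile queue~$i$ grows only by $A_i(t+1)$. In each case the new values of $Q_i(t+1)$ and $C_{i+1}(t+1)$ coincide with the ones prescribed by the roundabout dynamics, and station~$i+1$ receives at most one priority customer during the slot, so the invariant ``at most one priority customer per station'' is preserved.

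The main obstacle is not conceptual but a matter of careful bookkeeping: one must check that the common random variables $\{A_i(t),D_{ij}(t)\}_{t\geq 1}$ give, on the multiclass side, genuinely i.i.d.\ geometric inter-arrival times and i.i.d.\ routing decisions with the marginals of the routing matrix~\eqref{Eqn: routing matrix}, while still producing the same sample path as the roundabout. Independence of the routing decisions across customers follows because at most one priority customer occupies station~$i$ in any given slot, so exactly one of the variables $D_{ij}(t+1)$ is consumed per slot per station, and fresh independent copies are used at different slots; class-$(i,0)$ routing is deterministic and hence trivially consistent. With these verifications in place the induction closes, yielding the claimed sample-path identity $f(X^1(\omega,t))=X^2(\omega,t)$ for all $t\in\Z_+$.
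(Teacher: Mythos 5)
Your proposal is correct and takes essentially the same approach as the paper: the same bijection $f$, a coupling via common per-slot driving randomness (you use Bernoulli variables $A_i(t),D_{ij}(t)$ where the paper uses uniform variables $U_{ij}(t)$ with threshold events, a cosmetic difference), and a verification by induction on $t$ over the three cases of Section~\ref{sec:model}. The paper's proof is simply more explicit, writing out the one-step update equations for both chains and checking componentwise that they are identified by $f$, whereas you argue the case analysis in prose; your remark that at most one priority customer per station per slot ensures the time-indexed routing variables yield i.i.d.\ routing decisions is exactly the point the paper's construction implicitly relies on.
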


Lemma~\ref{Lemma: coupling} allows us to complete the proof of 
Theorem~\ref{Thm: (main) stability of the model} by proving that the stability 
condition implies stability of~$X^2(\cdot)$. For this we can rely on the 
theory of fluid limits and Foster--Lyapunov functions, cf.~\cite{bramson, 
bramson2, dai, li}.

\section{Proof of the main result}
\label{sec: proof main result}

This section of the paper is dedicated to proving that $X^2(\cdot)$ is stable 
if the stability condition in Theorem~\ref{Thm: (main) stability of the model} 
is satisfied. By Lemma~\ref{Lemma: coupling} and Proposition~\ref{Prop: 
necessity main theorem}, this completes the proof of Theorem~\ref{Thm: (main) 
stability of the model}. Nowadays there is a standardized approach for proving 
stability of multiclass queueing networks, which utilizes the powerful 
framework of \emph{fluid limits} and stability of the associated \emph{fluid 
model}. For a complete exposition of this technique we refer 
to~\cite{bramson}, which is based on work that goes back to~\cite{rybko, dai, 
bramson1996}. We begin our proof in Section~\ref{sec: queueing equations} by 
writing down the set of \emph{queueing equations} that describe the dynamics 
of the multiclass network in terms of simple stochastic processes, for which 
we derive the associated fluid model in Section~\ref{sec: fluid model}. We 
then complete the proof of stability in Section~\ref{sec: stability} by making 
use of the fluid model.

\subsection{The queueing equations}
\label{sec: queueing equations}

For $i$ in~$\{1,\ldots,L\}$, let $A_{i0}(\cdot) := \{A_{i0}(t)\colon t \in 
\Z_+\}$ be the stochastic process that counts the cumulative number of 
exogenous class~$(i,0)$ arrivals up to time~$t$. For $i,j$ in~$\{1,\ldots,L\}$ 
we set $A_{ij}(\cdot) \equiv 0$, reflecting that the customer class~$(i,j)$ 
has no exogenous arrivals. Let $Q^x_{ij}(t)$ denote the length of the queue 
containing class~$(i,j)$ customers at time~$t \in \Z_+$, where $x$ denotes the 
initial state of the system. This defines the queue length processes 
$Q^x_{ij}(\cdot) := \{Q^x_{ij}(t)\colon t \in \Z_+\}$. Note that they can be 
combined into a vector-valued process that has the same law as the Markov 
chain~$X^2 (\cdot)$ starting from the state~$x$. We set $T^x_{ij}(\cdot) := 
\{T^x_{ij}(t)\colon t \in \Z_+\}$, where $T^x_{ij}(t)$ is the cumulative 
amount of service time that has been spent by station~$i$ on class~$(i,j)$ 
customers up to time~$t$. As the service times of each customer class are 
precisely one unit of time, $T^x_{ij}(t)$ is also equal to the total number of 
class~$(i,j)$ service completions up to time~$t$. 

Recall that $\cK$ is the set of customer classes. For each class~$(i,j)$ we 
introduce the random vector~$\phi^{ij}(n)$ with components $\phi^{ij}_{k\ell} 
(n)$, $(k,\ell) \in \cK$, where $\phi^{ij}_{k\ell}(n) = 1$ if the $n$th 
class~$(i,j)$ customer served by station~$i$ is routed to the class~$(k,\ell)$ 
queue, and $\phi^{ij}_{k\ell}(n) = 0$ otherwise. Observe that $\phi^{ij}(n)$ 
is an $(L^2 +\nobreak L)$-dimensional random vector with expectation 
$(P_{ij})^\top$, where $P_{ij}$ is the row of the routing matrix~$P$ 
corresponding to the index~$(i,j)$. For each class~$(i,j)$ we thus have an 
i.i.d.\ sequence of routing vectors $\{\phi^{ij}(n) \colon n \in \N\}$. Define 
the \emph{cumulative routing process} as $\Phi^{ij}(\cdot) := 
\{\Phi^{ij}(n)\colon n \in \Z_+\}$, where $\Phi^{ij} (0) := 0$ and $\Phi^{ij} 
(n) := \sum_{k=1}^n \phi^{ij}(k)$ for~$n\geq 1$. With this definition, 
$\Phi^{ij}_{k\ell}(n)$ counts the number of class~$(i,j)$ customers that have 
been routed to the queue of class~$(k,\ell)$ customers after the first~$n$ 
service completions of class~$(i,j)$ customers.

For each class~$(i,j)$, the stochastic processes $A_{ij}(\cdot)$ and 
$\Phi^{k\ell}_{ij}(\cdot)$, $T^x_{k\ell}(\cdot)$ for all~$(k,\ell) \in \cK$ 
completely determine the paths of the queue length process~$Q^x_{ij}(\cdot)$. 
To be precise, for each time~$t$ we can calculate~$Q^x_{ij} (t)$ by adding the 
exogenous arrivals and the arrivals via routing to the initial queue length, 
and subtracting the departures. Thus we have for all~$t\in\Z_+$
\begin{align}
	& Q_{ij}^x(t) = Q_{ij}^x(0) + A_{ij}(t) + \sum\nolimits_{(k,\ell) \in\cK}
		\Phi^{k\ell}_{ij} \bigl( T_{k\ell}^x(t) \bigr) - T_{ij}^x(t),
		\vphantom\sum \label{queueing eq 1} \\
	& \text{$Q_{ij}^x(t) \geq 0$
		and $Q^x_{ij}(t+1) - Q^x_{ij}(t) \in \{-1,0,1\}$},
		\vphantom{\sum\nolimits_{(k,\ell)}} \label{queueing eq 2} 
		\displaybreak[1] \\
	& \text{$T_{ij}^x(t)$ is non-decreasing in~$t$,
		$T_{ij}^x(t+1)-T_{ij}^x(t) \leq 1$, and $T_{ij}^x(0) = 0$.}
		\vphantom\sum \label{queueing eq 3}
\end{align}

The multiclass network has the \emph{work-conserving property}, which means 
that a station will not idle whenever at least one of its queues is non-empty. 
To express this property in a formula, for $i \in \{1,\ldots,L\}$ and $t \in 
\Z_+$, let $I^x_i(t)$ denote the cumulative amount of time that station~$i$ 
has been idle up to time~$t$, and write $\Delta I^x_i (t) := I^x_i (t+1) - 
I^x_i (t)$. Then
\begin{align}
	& \text{$I_i^x(t) := t - \sum_{j=0}^L T_{ij}^x(t)$
		is non-decreasing in~$t$,}
		\vphantom\sum \label{queueing eq 4} \\
	& \text{$\sum_{t=0}^\infty \sum_{j = 0}^L Q_{ij}^x(t) \, \Delta I_i^x(t)
		= 0$ for all $i \in \{1,\ldots,L\}$}.
		\vphantom\sum \label{queueing eq 5}
\end{align}
Similarly to the way in which~\eqref{queueing eq 5} captures the 
work-conserving property, we can also account for the priorities in the 
system. Recall that we imposed that customers of classes~$(i,j)$ with $j \neq 
0$ have priority over class~$(i,0)$ customers. If we let $I^x_{i0} (t) := t - 
\sum_{j=1}^L T^x_{ij} (t)$ be the total service time available for 
class~$(i,0)$ customers up to time~$t$ at station~$i$, and write $\Delta 
I^x_{i0} (t) := I^x_{i0} (t+1) - I^x_{i0} (t)$, then the priority structure is 
captured by the \emph{priority equation}
\begin{equation}
	\text{$\sum_{t=0}^\infty \sum_{j=1}^L Q^x_{ij}(t) \, \Delta I^x_{i0} (t)
		= 0$ for all $i \in \{1,\ldots,L\}$}.
		\vphantom\sum \label{queueing eq 6}
\end{equation}

For each sample path of the multiclass network, the associated processes 
$Q^x_{ij} (\cdot)$ and~$T^x_{ij} (\cdot)$ have to satisfy 
equations~\eqref{queueing eq 1}--\eqref{queueing eq 6}. We refer to this set 
of equations as the \emph{queueing equations} or the \emph{queue length 
process representation}. The evolution of the sample paths clearly depends on 
the parameter setting, i.e., these equations are parameterized by the 
collection of $p_i$ and~$q_{ij}$.

\subsection{The fluid model}
\label{sec: fluid model}

We are now ready to introduce the fluid model equations, or \emph{fluid model} 
for short. They are continuous-time analogs of the queueing equations that 
arise naturally by taking a scaling limit of solutions to the queueing 
equations. To introduce these fluid model equations, it is convenient to first 
extend all the discrete-time processes introduced in Section~\ref{sec: 
queueing equations} to continuous time by linear interpolation. That is, we 
define $Q_{ij}^x (t)$ for non-integer values of~$t \in \R_+$ by interpolating 
linearly between $Q^x_{ij} \bigl( \floor{t} \bigr)$ and $Q^x_{ij} \bigl( 
\floor{t}+1 \bigr)$, and we extend the other processes introduced in 
Section~\ref{sec: queueing equations} to continuous time in the same way.%
\footnote{In continuous time, $Q^x_{ij} (t)$ can be interpreted as the fluid 
mass present in the class~$(i,j)$ queue at time~$t$ if we replace customers by 
unit masses of fluid that flows in and out of the queues at unit rate, 
controlled by valves at the stations that open and close (according to the 
priority structure) at integer times.}
Furthermore, we switch to a vector notation by defining $Q^x (\cdot) := \{ 
Q^x(t) \colon t\in \R_+\}$ as the vector-valued process with 
components~$Q^x_{ij} (\cdot)$, $(i,j) \in \cK$. Likewise, we define~$T^x 
(\cdot)$ as the process with components~$T^x_{ij} (\cdot)$. All vectors are 
column vectors, and (in)equalities between vectors have to be read 
component-wise.

To obtain the fluid model, we scale the queue length and service time 
processes by the norm of the initial state~$x$. That is, we introduce the 
scaled processes $SQ^x (\cdot)$ and~$ST^x (\cdot)$ by setting
\[
	SQ^x (t) := \frac{\;\:1}{\norm{x} \bmax1} \, Q^x\bigl( \norm{x} t \bigr)
	\quad\text{and}\quad
	ST^x (t) := \frac{\;\:1}{\norm{x} \bmax1} \, T^x\bigl( \norm{x} t \bigr)
\]
for each $t \in \R_+$. We claim that for any sequence of initial states with 
norm tending to~$\infty$, these scaled processes converge at almost every 
sample point along a subsequence to limit processes $\bQ(\cdot)$ 
and~$\bT(\cdot)$ which we refer to as a \emph{fluid limit}. Moreover, this 
fluid limit has to satisfy fluid model analogs of the queueing 
equations~\eqref{queueing eq 1}--\eqref{queueing eq 6}. The first three of 
those equations are
\begin{align}
	& \bQ(t) = \bQ(0) + p\, t  - (I-P^\top) \, \bT(t),
		\vphantom\sum \label{fluid eq 1} \\
	& \text{$\bQ(t) \geq 0$
		and each component of~$\bQ(\cdot)$ is Lipschitz-1},
		\vphantom\sum \label{fluid eq 2} \displaybreak[1] \\
	& \text{$\bT(t)$ is non-decreasing in~$t$,
		each component of~$\bT(\cdot)$ is Lipschitz-1, and $\bT(0)=0$},
		\vphantom\sum \label{fluid eq 3}
\end{align}
where $I$ denotes the identity matrix of dimension~$L^2+L$ and $p$ is the 
vector with components~$p_{ij}$ for $(i,j) \in \cK$ defined by $p_{i0} := p_i$ 
and $p_{ij} := 0$ if $j \neq 0$. Next we have the two equations
\begin{align}
	& \text{$\bI(t) := e\, t - C \, \bT(t)$ is non-decreasing in~$t$},
		\vphantom\sum \label{fluid eq 4} \\
	& \text{$\int_0^\infty \bQ_{i0}(t) \diff \bI_i(t) = 0$
		for all $i \in \{1,\ldots,L\}$},
		\vphantom\sum \label{fluid eq 5}
\end{align}
where $e$ is the $L$-dimensional vector of ones and $C$ is the incidence 
matrix linking each station to the customer classes served by that station; it 
has dimension~$L \times (L^2+L)$ and entries defined for $i \in 
\{1,\dots,L\}$ and $(j,k) \in \cK$ by $C_{i,jk} = 1$ if $j=i$ and $C_{i,jk} = 
0$ otherwise. Finally,
\begin{equation}
	\text{$\bQ_{ij} (\cdot) \equiv 0$ for all $i,j \in \{1,\ldots,L\}$}.
		\vphantom\sum \label{fluid eq 6}
\end{equation}
It may not be obvious that this last equation captures the priority structure 
in the fluid limit, but note that \eqref{fluid eq 6} together 
with~\eqref{fluid eq 1} yields $\bT_{ij}(t) = \sum_{(k,\ell) \in \cK} 
\bT_{k\ell}(t) \, P_{k\ell,ij}$ for $i,j \in \{1,\dots,L\}$. This says that in 
the fluid limit, the stations must instantly spend service time on customers 
who enter the ring to route them through the network, reflecting their 
priority.

We refer to equations \eqref{fluid eq 1}--\eqref{fluid eq 6} as the 
\emph{fluid model equations} or \emph{fluid model}. Every pair of 
functions~$\bQ(\cdot), \bT(\cdot)$ that is a solution to these equations is 
called a \emph{fluid model solution}. Our claims about convergence to a fluid 
limit which is a fluid model solution are made precise in the following 
theorem. A general version of this result has been proven in~\cite{dai} for 
continuous-time multiclass networks. Because of the discrete-time nature of 
our processes, the proof of the theorem requires some small modifications, and 
is therefore given in Appendix~\ref{sec: appendix convergence fluid limit}.

\begin{theorem}\label{Thm: main thm convergence fluid limit}
	There exists a set of sample points $\Omega' \subset \Omega$ with 
	$\PP(\Omega') = 1$ such that for every sequence of initial states~$\{ x'_n 
	\}$ with $\norm{x'_n} \to \infty$, we can find for each~$\omega \in 
	\Omega'$ a subsequence~$\{ x''_n (\omega) \}$ and functions $\bQ (\omega, 
	\cdot), \bT (\omega, \cdot)$ so that everywhere on~$\Omega'$ (omitting 
	$\omega$ from the notation),
	\begin{equation}
		\sup_{t\in[0,m]} \, \normbig{ SQ^{x''_n}(t) - \bQ(t) }
		\bmax \normbig{ ST^{x''_n}(t) - \bT(t) } \to 0
		\qquad \text{as $n\to\infty$}
		\label{main thm convergence to fluid limit}
	\end{equation}
	for all~$m\in\N$. Moreover, on~$\Omega'$, the pair $\bQ (\cdot), \bT 
	(\cdot)$ is a fluid model solution with $\norm{\bQ(0)} = 1$.
\end{theorem}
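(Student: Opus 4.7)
The proof follows the standard Dai--Bramson template, with small adaptations for the discrete-time setting. The three steps are: (i) construct a single almost sure set $\Omega'$ on which functional strong laws of large numbers (SLLNs) hold for all primitive stochastic processes driving the network; (ii) extract subsequential uniform-on-compacts limits of $SQ^{x''_n}$ and $ST^{x''_n}$ via an Arzelà--Ascoli argument; and (iii) identify these limits as a fluid model solution by passing to the limit in the queueing equations \eqref{queueing eq 1}--\eqref{queueing eq 6}.

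For (i), the primitives are the exogenous arrival processes $A_{i0}(\cdot)$, with i.i.d.\ Bernoulli$(p_i)$ increments, and the cumulative routing processes $\Phi^{ij}(\cdot)$, built from the i.i.d.\ sequences $\{\phi^{ij}(n)\}_{n \in \N}$ with mean $(P_{ij})^\top$. By the functional SLLN we have, almost surely uniformly on compacts, $t^{-1} A_{i0}(\lfloor t \rfloor) \to p_i$ and $n^{-1} \Phi^{ij}(n) \to (P_{ij})^\top$. Intersecting these finitely many full-measure events yields $\Omega'$ with $\PP(\Omega') = 1$.

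For (ii) and (iii), equation \eqref{queueing eq 3} tells us $T^x_{ij}$ has increments in $\{0,1\}$, so after the $\norm{x}$-scaling each $ST^x_{ij}(\cdot)$ is Lipschitz-1 and vanishes at the origin; moreover $\norm{SQ^x(0)} = \norm{x}/(\norm{x} \bmax 1) \to 1$. By Arzelà--Ascoli, along any sequence with $\norm{x'_n} \to \infty$ and for each $\omega \in \Omega'$, we may extract a subsequence $x''_n = x''_n(\omega)$ on which $ST^{x''_n}_{ij}(\cdot) \to \bT_{ij}(\cdot)$ uniformly on compacts (with $\bT_{ij}$ Lipschitz-1), and $SQ^{x''_n}(0) \to \bQ(0)$ with $\norm{\bQ(0)} = 1$. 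Feeding this into the scaled version of \eqref{queueing eq 1}, and applying the SLLN for $A_{i0}$ together with a standard composition theorem for $\Phi^{k\ell}$ with the uniformly converging time-change $ST^{x''_n}_{k\ell}$, gives uniform-on-compacts convergence $SQ^{x''_n} \to \bQ$ with $\bQ$ satisfying \eqref{fluid eq 1}. Properties \eqref{fluid eq 2}--\eqref{fluid eq 4} pass to the limit directly from their prelimit counterparts.

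The main obstacle is identifying the last two fluid equations. Equation \eqref{fluid eq 6} is immediate in our setting: the construction in Section~\ref{sec: multiclass network description} forces $Q^x_{ij}(t) \in \{0,1\}$ whenever $j \neq 0$, so $SQ^{x''_n}_{ij}(t) \leq (\norm{x''_n} \bmax 1)^{-1} \to 0$ uniformly, whence $\bQ_{ij} \equiv 0$. For \eqref{fluid eq 5}, the argument is the familiar one: at any $t_0$ with $\bQ_{i0}(t_0) > 0$, uniform convergence produces a prelimit neighborhood of $\norm{x''_n} t_0$ on which $Q^{x''_n}_{i0} > 0$; combined with $\bQ_{ij} \equiv 0$ for $j \neq 0$, the prelimit work-conservation and priority identities \eqref{queueing eq 5}--\eqref{queueing eq 6} force $I^{x''_n}_i$ to be constant on that neighborhood, hence $\bI_i$ is constant near $t_0$, and $d\bI_i$ is supported on $\{\bQ_{i0} = 0\}$. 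The remaining bookkeeping (linear interpolation between integer times, composition of counting processes with step-function time-changes) is a cosmetic variation on the continuous-time argument in \cite{dai}.
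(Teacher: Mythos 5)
Your proposal is correct and follows the same overall Dai--Bramson template as the paper: an almost-sure set on which the primitive processes satisfy functional strong laws of large numbers, a diagonal Arzel\`a--Ascoli extraction of a uniformly-on-compacts convergent subsequence of $ST^{x}$, and identification of the limit with a fluid model solution by passing to the limit in the queueing equations. The one place where you take a genuinely different (but equally standard) route is the verification of the complementarity condition~\eqref{fluid eq 5}. You argue locally: wherever $\bQ_{i0}(t_0)>0$, uniform convergence makes the prelimit queue strictly positive (and, for $n$ large, much larger than~$1$, so that it remains positive at integer times by the Lipschitz-1 property) on a whole scaled neighborhood of~$t_0$; work conservation then freezes the scaled idle time there, and uniform convergence of $SI^{x''_n}_i$ passes this flatness to~$\bI_i$, so $\diff\bI_i$ is supported on $\{\bQ_{i0}=0\}$. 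The paper instead proves a Lebesgue--Stieltjes convergence lemma (part~(b) of Lemma~\ref{Lemma: uoc convergence of sequences}) and combines it with the discrete-time estimate $0\le\int_0^m SQ^{x''_n}_{i0}\diff SI^{x''_n}_i\le m/\norm{x''_n}$ to send the prelimit complementarity integral to the limit directly; this buys a reusable lemma at the price of a slightly longer setup, while your pointwise argument is more elementary but handles the discrete-time floor function implicitly rather than explicitly. Both are sound, and the remainder of your identification of~\eqref{fluid eq 1}--\eqref{fluid eq 4} and~\eqref{fluid eq 6} matches the paper's reasoning step for step.
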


\subsection{Stability of the fluid model}
\label{sec: stability}

In this section we complete the proof of Theorem~\ref{Thm: (main) stability of 
the model}, with an appeal to Lemma~\ref{Lemma: coupling}, by proving that the 
multiclass network defined in Section~\ref{sec: multiclass network 
description} is stable under the stability condition of the theorem. The key 
is to prove stability of the fluid model, which is defined as follows:

\begin{definition}\label{Def: fluid model stability}
	We say that the fluid model~\eqref{fluid eq 1}--\eqref{fluid eq 6} is 
	\emph{stable} if there exists a constant $\delta > 0$ that depends only on 
	$p$ and~$P$ such that any fluid model solution $\bQ(\cdot), \bT(\cdot)$ 
	with $\norm{\bQ(0)} = 1$ satisfies $\bQ(t) = 0$ for all~$t\geq\delta$.
\end{definition}

It is well-known that in general, stability of the fluid model is a sufficient 
condition for stability of~$X^2(\cdot)$. This result has been proven for 
Markov chains with general state spaces under an additional condition, e.g., 
see \cite[Theorem~4.2] {dai} or~\cite[Theorem~4.16] {bramson}. In the setting 
of a countable state space, a considerably shorter argument is given 
in~\cite{bramson2}. For completeness, we formulate this key result in the 
following proposition, a proof of which is included in Appendix~\ref{sec: 
appendix convergence fluid limit}.

\begin{proposition}\label{Prop: original Markov chain pos rec if fluid limit 
	model is stable}
	If the fluid model is stable, then the Markov chain~$X^2(\cdot)$ is 
	positive recurrent.
\end{proposition}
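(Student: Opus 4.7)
The plan is to verify a Foster--Lyapunov-type multiplicative drift condition using the Lyapunov function $V(x) := \norm{x}$ evaluated at a state-dependent time horizon of order~$\norm{x}$. The strategy is to use fluid stability to force the scaled queue length to vanish by time~$\delta$ in the fluid limit, and then transfer this estimate back to the prelimit Markov chain via Theorem~\ref{Thm: main thm convergence fluid limit}.

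First, I would fix an arbitrary sequence of initial states~$\{x_n\}$ with $\norm{x_n} \to \infty$ and apply Theorem~\ref{Thm: main thm convergence fluid limit} to extract, on a full-measure event, a subsequence~$\{x''_n\}$ along which $SQ^{x''_n}(\cdot)$ converges uniformly on compacts to a fluid model solution~$\bQ(\cdot)$ with $\norm{\bQ(0)}=1$. Fluid stability forces $\bQ(\delta)=0$, hence $\norm{x''_n}^{-1}\,\normbig{Q^{x''_n}(\norm{x''_n}\delta)} \to 0$ almost surely. To upgrade this to convergence of expectations, I would use the deterministic bound $\norm{Q^x(t)} \leq \norm{x} + Lt$ (at most one exogenous arrival per station per time step), which makes the scaled quantity bounded by $1+L\delta$. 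Uniform integrability is then immediate, and a standard ``every sequence has a further subsequence'' contradiction argument yields
\[
	\lim_{\norm{x}\to\infty}
		\E\bigl[ \norm{x}^{-1}\,\norm{Q^x(\floor{\norm{x}\delta})} \bigr] = 0.
\]

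In particular, I can pick $N$ so that $\norm{x} \geq N$ implies $\E_x\bigl[ V\bigl(X^2(\floor{\norm{x}\delta})\bigr) \bigr] \leq V(x)/2$. Combined with the uniformly bounded one-step increment $\absbig{V(X^2(t+1)) - V(X^2(t))} \leq L$, this fits into a standard state-dependent drift framework for discrete-time Markov chains (e.g., the countable-state specialization of~\cite[Theorem~4.2]{dai}, the shorter argument in~\cite{bramson2}, or~\cite[Chapter~19]{MT2012}), which yields positive recurrence of~$X^2(\cdot)$ on the set of states reachable from the empty state; irreducibility and aperiodicity on this set were already noted in Section~\ref{sec: preliminaries}.

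The main obstacle I foresee is the passage from subsequential almost-sure convergence along each chosen sequence to a \emph{uniform} multiplicative drift holding for all sufficiently large~$\norm{x}$. The deterministic linear upper bound on the queue length simultaneously provides the uniform integrability needed to promote a.s.\ convergence to $L^1$-convergence and the basis for the contradiction argument that turns subsequential convergence into a genuine limit. Once the displayed limit is in hand, the remainder is essentially bookkeeping around an off-the-shelf drift criterion.
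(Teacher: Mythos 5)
Your proposal is correct and follows essentially the same approach as the paper's proof: use the fluid limit theorem and the subsequence argument to obtain a.s.\ convergence of $\norm{SQ^{x}(\delta)}$ to zero, upgrade this to convergence of expectations via a deterministic bound on the scaled queue length, and then feed the resulting state-dependent multiplicative drift into a Foster--Lyapunov criterion together with bounded one-step increments. The only cosmetic differences are that the paper enumerates the countable state space explicitly (rather than phrasing things as ``$\lim_{\norm{x}\to\infty}$''), bounds the scaled queue length via the Lipschitz-1 property of each component rather than via the $L$-arrivals-per-step bound, and cites a specific drift theorem (Robert) with the explicit stopping time $\tau = \norm{X^2(0)}\,t \bmax 1$.
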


So all that remains is to prove that, under the stability condition in 
Theorem~\ref{Thm: (main) stability of the model}, the fluid model is stable. A 
more general version of this result is proven in~\cite[Section~6] {dai1996}, 
but here we present a more transparent proof based on~\cite{tassiulas1996}. It 
is shown in~\cite{tassiulas1996} that under deterministic assumptions on the 
external arrivals and routing in a ring network, any work-conserving policy 
stabilizes the ring. These assumptions are satisfied in our case because in 
the fluid limit, the arrival and routing processes become deterministic due to 
the strong law of large numbers (this is one of the key steps in the proof of 
Theorem~\ref{Thm: main thm convergence fluid limit} in Appendix~\ref{sec: 
appendix convergence fluid limit}).

In our proof that the fluid model is stable, we consider the residual amount 
of time stations will be busy, given the amount of fluid mass that is present 
in the network. In essence, the goal of the proof is to show that this 
quantity, and hence the total fluid mass, must become zero before a fixed 
deterministic time that is determined only by the parameter setting. This 
means that we require a stronger result than the main theorem 
in~\cite{tassiulas1996}, which (translated to our context) states only that 
the total fluid mass goes to zero in the limit~$t\to \infty$. Moreover, 
traffic in~\cite{tassiulas1996} leaves the ring as soon as it reaches its 
destination, whereas fluid mass in our model can complete multiple full 
circles (albeit instantly) before being removed. This means that the 
quantities we consider, such as the residual work, are slightly different than 
the ones used in~\cite{tassiulas1996}, and we also need to adapt the argument 
to obtain the stronger result we require.

It is customary in the theory of multiclass queueing networks to consider the 
quantities
\[
	\lambda := \bigl( I-P^\top \bigr)^{-1} p
	\qquad\text{and}\qquad
	\rho := C \lambda.
\]
The quantity~$\lambda$ is known as the solution of the traffic 
equations~\cite[Section~1.2] {bramson}, and $\lambda_{ij}$ can be interpreted 
as the effective arrival rate to the queue of class~$(i,j)$ customers. By 
summing over~$j$ we obtain the effective arrival rate at station~$i$, given by 
$\rho_i = (C\lambda)_i = \sum_{j=0}^L \lambda_{ij}$. In any multiclass 
network, $\rho_i$ represents the amount of service time that arrives to the 
network per unit time and is required from station~$i$ in the future. One can 
prove that the multiclass network cannot be stable when $\rho<1$ fails to 
hold, using an argument similar to the one we used to prove 
Proposition~\ref{Prop: necessity main theorem}. In many cases $\rho < 1$ is 
sufficient for stability, although this is not always true (see 
\cite[Chapter~3] {bramson} for counter-examples). The next lemma establishes 
that the condition for stability in Theorem~\ref{Thm: (main) stability of the 
model} is equivalent to the condition that~$\rho<1$. 

\begin{lemma}\label{Lemma: rho < 1 iff pi_i0 > p_i}
	For all~$i \in \{1,\dots,L\}$ we have $\rho_i = p_i + 1 - \pi_{i0}$ and 
	hence
	\[
		\rho_i < 1 \quad \text{if and only if} \quad \pi_{i0} > p_i.
	\]
\end{lemma}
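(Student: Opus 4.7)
The plan is to identify the effective arrival rates $\lambda_{ij}$ with the quantities $p_j\,b_{ij}$ appearing in the definition of $\pi_{i0}$. More precisely, I will show that $\lambda_{i0}=p_i$ and $\lambda_{ij}=p_j\,b_{ij}$ for $j\in\{1,\dots,L\}$, so that by the definition~\eqref{Eqn: pi_ij} of $\pi_{i0}$,
\[
\rho_i \;=\; \sum_{j=0}^{L} \lambda_{ij} \;=\; p_i + \sum_{j=1}^{L} p_j\, b_{ij} \;=\; p_i + 1 - \pi_{i0},
\]
from which the stated equivalence $\rho_i<1 \Leftrightarrow \pi_{i0}>p_i$ is immediate.

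For $\lambda_{i0}$, I would inspect the routing matrix~\eqref{Eqn: routing matrix}: in both nonzero cases the second coordinate of the target class is either $i$ or $j$, which lies in $\{1,\dots,L\}$. Thus column $(i,0)$ of $P$ vanishes, and the traffic equation $\lambda=p+P^\top\lambda$ yields $\lambda_{i0}=p_{i0}=p_i$. For $j\in\{1,\dots,L\}$, the same inspection shows that $\lambda_{ij}$ receives contributions only from the class $(i-1,0)$ (which contributes $p_{i-1}=p_j$ exactly when $j=i-1$) and from $(i-1,j)$ (with factor $1-q_{i-1,j}$), giving the cyclic recursion
\[
\lambda_{ij} \;=\; \one{j=i-1}\, p_j + (1-q_{i-1,j})\, \lambda_{i-1,j}.
\]
There is a single exogenous injection for type $j$, located at $i=j+1$. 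Unrolling the recursion once around the ring and solving for $\lambda_{j+1,j}$ gives $\lambda_{j+1,j}=p_j\,\bigl(1-\prod_{\ell=1}^{L}(1-q_{\ell,j})\bigr)^{-1}$, and then propagating forward yields $\lambda_{ij} = \lambda_{j+1,j}\prod_{\ell=j+1}^{i-1}(1-q_{\ell,j})$ with the cyclic convention from Section~\ref{sec: marg stat dist}. Comparing with the explicit formula~\eqref{Eqn: b_ij} from Lemma~\ref{Lemma: EN_ij} shows $\lambda_{ij}=p_j\,b_{ij}$, as required.

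There is no real obstacle: the argument is a first-step analysis on the ring-shaped routing graph, followed by matching the solution with the closed form already derived for $b_{ij}$. The only thing to be careful about is keeping the cyclic indexing consistent in the two cases $i\leq j$ and $i>j$, which is handled automatically by the convention $q_{\ell j}:=q_{ij}$ for $\ell\equiv i\pmod{L}$. As a sanity check, one can also argue conceptually: $\lambda_{ij}/p_j$ is the expected number of class-$(i,j)$ arrivals per external arrival to queue~$j$, i.e., the expected number of visits to cell $i$ by a type-$j$ vehicle, which is by definition $b_{ij}=\E N_{ij}$.
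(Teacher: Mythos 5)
Your proof is correct, and it takes a genuinely different route from the paper's. The paper expands $(I-P^\top)^{-1}$ as a Neumann series, reduces to $\rho_i = p_i + \sum_j\sum_{m\geq1}(P^m)_{j0,ij}\,p_j$, and then identifies $\sum_{m\geq1}(P^m)_{j0,ij}$ with $b_{ij}=\E N_{ij}$ by a one-line probabilistic interpretation, never needing the explicit closed form~\eqref{Eqn: b_ij}. You instead solve the traffic equation $\lambda = p + P^\top\lambda$ directly: the ring structure makes it a cyclic first-order recursion in the station index, which you unroll once around the ring to pin down $\lambda_{j+1,j}$, propagate, and then match term-by-term against~\eqref{Eqn: b_ij}. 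The paper's argument is shorter and independent of the closed form for $b_{ij}$; yours is more computational but yields the effective arrival rates $\lambda_{ij}$ explicitly along the way, and your final ``sanity check'' paragraph is in fact the paper's actual argument. One small point to make explicit: when you solve for $\lambda_{j+1,j}$ you divide by $1-\prod_{\ell=1}^L(1-q_{\ell j})$, which is nonzero precisely by the standing assumption $\prod_\ell(1-q_{\ell j})<1$ from Section~\ref{sec:model}; this is also what guarantees $(I-P^\top)$ is invertible, so that the $\lambda$ you construct is the one defined by the paper.
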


\begin{proof}
	Let $i \in \{1,\ldots,L\}$. We will show that $\rho_i = p_i + \sum_{j=1}^L 
	b_{ij} \, p_j$, where we recall from Section~\ref{sec: marg stat dist} 
	that $b_{ij}$ was defined in the roundabout model as the expected number 
	of visits to cell~$i$ by a type-$j$ vehicle that enters the roundabout. We 
	start by observing that
	\[
		\rho
		= C \bigl( I-P^\top \bigr)^{-1} p
		= C \biggl( I + \sum_{m = 1}^\infty \bigl( P^\top \bigr)^m
			\biggr)\, p.
	\]
	Hence, using the definition~\eqref{Eqn: routing matrix} of the routing 
	matrix~$P$ and the fact that $p_{i0} = p_i$ and $p_{ij} = 0$ for $j\neq0$, 
	we obtain
	\[
		\rho_i
		= \sum_{j=0}^L p_{ij} + \sum_{j=0}^L \sum_{m=1}^\infty
			\sum_{(k,\ell) \in \cK} (P^m)_{k\ell,ij} \, p_{k\ell}
		= p_i + \sum_{j=1}^L \sum_{m=1}^\infty (P^m)_{j0,ij} \, p_j.
	\]
	Here, $(P^m)_{j0,ij}$ is the probability that a class~$(j,0)$ customer is 
	routed to become a class~$(i,j)$ customer in $m$~steps, which is the same 
	as the probability in the roundabout model that a vehicle that enters the 
	roundabout at cell~$j$ visits cell~$i$ after having stayed on the 
	roundabout for $m-\nobreak 1$ time steps. Hence, we see that $\rho_i$ is 
	equal to $p_i + \sum_{j=1}^L b_{ij} \, p_j$, and we are done.
\end{proof}

The proof that the fluid model is stable hinges on another crucial lemma 
concerning the residual amount of work for each station~$i$ at a given time. 
We claim that this quantity is described by the process $\bR (\cdot) := 
\bigl\{ \bR(t) \colon t \in \R_+ \bigr\}$ defined through
\begin{equation}
	\label{Def: residual work}
	\bR(t) := C\bigl( I-P^\top \bigr)^{-1} \bQ(t).
\end{equation}
To see that this process describes the residual work in the system, note first 
of all the analogy with the definition of~$\rho$ as $C \bigl( I-P^\top 
\bigr)^{-1} p$. Since we have both $p_{ij} = 0$ and, by~\eqref{fluid eq 6}, 
$\bQ_{ij} (t) = 0$ for~$j\neq 0$, we can repeat the steps in the proof of 
Lemma~\ref{Lemma: rho < 1 iff pi_i0 > p_i} with $\bQ(t)$ in place of~$p$ to 
see that
\begin{equation}
	\label{Eqn: R_i}
	\bR_i(t)
	= \bQ_{i0}(t) + \sum_{j=1}^L b_{ij} \, \bQ_{j0}(t).
\end{equation}
By definition, $b_{ij}$ is the mean number of times a customer who arrives at 
station~$j$ in the multiclass network (i.e., a class~$(j,0)$ customer) intends 
to visit station~$i$ (as a class~$(i,j)$ customer) in the future. It follows 
that we can indeed interpret~$\bR_i(t)$ as the residual amount of work for 
station~$i$ that is present in the system at time~$t$.

This quantity has an important property, which is the fluid equivalent of the 
statement that in the multiclass network, when the queue of class~$(i,0)$ 
customers is empty, all customers who are present in the system and will visit 
station~$i$, must also visit the preceding station~$i-\nobreak1$. This 
crucially reflects the ring topology in our system, and enables us to control 
the decay of the amount of fluid in the fluid model. We formulate this key 
result in the following lemma:

\begin{lemma}\label{Lemma: circularity bound R_i}
	If $\bQ(\cdot), \bT(\cdot)$ is a fluid model solution, then for all~$i \in 
	\{1,\ldots,L\}$ and $t \geq 0$,
	\[
		\bQ_{i0}(t) = 0
		\quad\text{implies}\quad
		\bR_{i-1}(t) \geq \bR_{i}(t),
	\]
	where $i-1$ has to be read as $L$ when $i=1$.
\end{lemma}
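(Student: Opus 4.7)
The plan is to prove the lemma by a direct computation of the difference $\bR_{i-1}(t) - \bR_i(t)$ using the explicit formula \eqref{Eqn: R_i} for $\bR_i(t)$, together with a simple recursion for the constants $b_{ij}$ as $i$ varies with $j$ fixed. The recursion I have in mind is
\[
    b_{i+1,j} = (1-q_{ij})\,b_{ij} + \one{i=j},
\]
valid for all $i,j\in \{1,\dots,L\}$ (with the convention that indices are taken cyclically modulo~$L$). This can be verified by straightforward inspection of the closed form~\eqref{Eqn: b_ij}, or more transparently by a probabilistic argument: a vehicle that has entered the roundabout at queue~$j$ first occupies cell~$j+1$, and the only way it can later occupy cell~$i+1$ is to first occupy cell~$i$ and then survive (probability $1-q_{ij}$); the initial visit to cell~$j+1$ is accounted for by the indicator.

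Given this recursion, I would substitute into~\eqref{Eqn: R_i} to obtain
\[
    \bR_{i-1}(t) - \bR_i(t)
    = \bQ_{i-1,0}(t) - \bQ_{i0}(t) + \sum_{j=1}^{L} \bigl(b_{i-1,j} - b_{ij}\bigr)\bQ_{j0}(t).
\]
Applying the recursion (with $i-1$ in place of~$i$) gives $b_{i-1,j} - b_{ij} = q_{i-1,j}\,b_{i-1,j} - \one{i-1=j}$. The indicator contributes exactly $-\bQ_{i-1,0}(t)$ to the sum, which cancels with the leading $\bQ_{i-1,0}(t)$, leaving
\[
    \bR_{i-1}(t) - \bR_i(t)
    = -\bQ_{i0}(t) + \sum_{j=1}^{L} q_{i-1,j}\,b_{i-1,j}\,\bQ_{j0}(t).
\]
Assuming $\bQ_{i0}(t) = 0$, the right-hand side is a sum of non-negative terms, which yields $\bR_{i-1}(t) \geq \bR_i(t)$ as required.

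The main obstacle is really just pinpointing and justifying the recursion for $b_{ij}$; everything else is bookkeeping. The appearance of the indicator $\one{i=j}$ (corresponding to the fact that the ``entry point'' of a type-$j$ vehicle is cell~$j+1$ rather than cell~$j$) is the one place where care is needed, but it is precisely this term that cancels the stray $\bQ_{i-1,0}(t)$ and produces the clean non-negative expression on the right-hand side. Note that the argument makes the role of the ring topology transparent: the only way for mass that must visit station~$i$ to be present in the system without contributing to $\bR_{i-1}(t)$ would be via a class $(i,0)$ queue, and when that queue is empty, every class~$(j,0)$ contribution to $\bR_i(t)$ is dominated by the corresponding contribution to $\bR_{i-1}(t)$, up to the discarded $q_{i-1,j}\,b_{i-1,j}\,\bQ_{j0}(t)$ term, which represents the fraction of such mass that would depart at station~$i-1$.
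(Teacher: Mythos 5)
Your proof is correct. The recursion $b_{i+1,j} = (1-q_{ij})\,b_{ij} + \one{i=j}$ (with cyclic indices) does hold — it can be checked case by case against the closed form~\eqref{Eqn: b_ij}, and your probabilistic justification is sound: each visit to cell~$i+1$ other than the initial entry visit (when $i=j$) must be preceded by a survival at cell~$i$, and the survival indicator is independent of whether the $n$-th visit to cell~$i$ occurs, so the expectation factors. The algebra then goes through exactly as you describe, and the sign of each remaining term is manifestly non-negative.

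Your route differs from the paper's in a mild but genuine way. The paper works directly with the power-series expansion $b_{ij} = \sum_{m\geq1}(P^m)_{j0,ij}$ and shows that, with $k$ the predecessor of $i$, the $m=1$ term in $\bR_k - \bR_i$ cancels the $\bQ_{k0}$ contribution while for $m\geq1$ each summand $(P^m)_{j0,kj} - (P^{m+1})_{j0,ij}$ is non-negative because a class-$(j,0)$ customer reaching $(i,j)$ in $m+1$ steps must pass through $(k,j)$ at step~$m$. You instead compress this into the one-step recursion for $b_{ij}$ and do a finite, closed-form cancellation. The underlying fact being used is the same — on the ring, mass bound for station~$i$ must first traverse station~$i-1$, except for mass that originates at the $(i,0)$ queue itself — but your version avoids manipulating infinite sums of matrix powers and is arguably cleaner. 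The one thing you should add for a self-contained write-up is an actual verification of the recursion (the paper never states it), either by the short case split against~\eqref{Eqn: b_ij} (distinguishing $i<j$, $i=j$, $i>j$, plus the wraparound $j=L$) or by fleshing out the Wald-type argument that $\E[\text{survivals at cell }i] = (1-q_{ij})\,b_{ij}$.
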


\begin{proof}
	Let $t \geq 0$ be arbitrary and assume $\bQ_{i0} (t) = 0$. Let $k := L - 
	(L +\nobreak 1 -\nobreak i \mod L)$ denote the station preceding 
	station~$i$ in the network. Then, using~\eqref{Eqn: R_i}, we have
	\[\begin{split}
		\bR_k(t) - \bR_i(t)
		&= \bQ_{k0}(t) + \sum_{j=1}^L \sum_{m=1}^\infty (P^m)_{j0,kj} \, 
		\bQ_{j0}(t) - \sum_{j=1}^L \sum_{m=1}^\infty (P^m)_{j0,ij} \, 
		\bQ_{j0}(t) \\
		&= \bQ_{k0}(t) - \sum_{j=1}^L P_{j0,ij} \, \bQ_{j0}(t) + \sum_{j=1}^L 
		\sum_{m=1}^\infty \bigl( (P^m)_{j0,kj} - (P^{m+1})_{j0,ij} \bigr) \, 
		\bQ_{j0}(t).
	\end{split}\]
	Since $P_{k0,ik} = 1$ and $P_{j0,ij} = 0$ for $j\neq k$, the first two 
	terms on the right cancel each other. Furthermore, since for $m\geq1$, a 
	class~$(j,0)$ customer who is routed to become a class~$(i,j)$ customer in 
	$m+1$~steps must first be routed to become a class~$(k,j)$ customer after 
	$m$~steps, each term under the sum over~$m$ is non-negative. Hence, 
	$\bR_k(t) \geq \bR_i(t)$.
\end{proof}

We are now ready to state and prove the main result of this section, the 
stability of the fluid model under the condition~$\rho<1$ from 
Lemma~\ref{Lemma: rho < 1 iff pi_i0 > p_i}, which also completes the proof of 
Theorem~\ref{Thm: (main) stability of the model}.

\begin{theorem}\label{Thm: stability fluid limit model}
	If $\rho_i < 1$ for each $i \in \{1,\dots,L\}$, then the fluid model 
	\eqref{fluid eq 1}--\eqref{fluid eq 6} is stable.
\end{theorem}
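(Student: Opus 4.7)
The plan is to prove the fluid model is stable by introducing the Lyapunov function
\[
V(t) := \max_{1 \leq i \leq L} \bR_i(t)
\]
and showing that $V$ vanishes by a time depending only on the parameters. Since $\bR_i(t) \geq \bQ_{i0}(t) \geq 0$ by \eqref{Def: residual work}, and $\bQ_{ij}(t) = 0$ for $j \neq 0$ by \eqref{fluid eq 6}, once $V(t) = 0$ we automatically have $\bQ(t) = 0$. Using $\bR(t) = C(I - P^\top)^{-1}\bQ(t)$ together with the Lipschitz bound on $\bQ$ from \eqref{fluid eq 2}, $V$ is Lipschitz with a parameter-dependent constant, and $V(0) \leq \max_i (1 + \sum_j b_{ij}) =: C_0$ since $\norm{\bQ(0)} = 1$.

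The central claim is that at every $t$ with $V(t) > 0$, the upper right derivative satisfies $D^+ V(t) \leq -\eta$, where $\eta := \min_k (1 - \rho_k) > 0$ by hypothesis and Lemma~\ref{Lemma: rho < 1 iff pi_i0 > p_i}. Given this, absolute continuity of $V$ yields $V(t) \leq (V(0) - \eta t)^+$, so $V(t) = 0$ for all $t \geq C_0/\eta =: \delta$, proving fluid-model stability in the sense of Definition~\ref{Def: fluid model stability}.

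For the derivative bound, let $J(t) := \{i : \bR_i(t) = V(t)\}$ and pick any $i \in J(t)$. If $\bQ_{i0}(s) > 0$ on some right neighborhood $(t, t+\epsilon)$, work conservation together with the priority equation forces station $i$ to be fully busy on that neighborhood, so $\sum_j \bT_{ij}'(t^+) = 1$ and hence $\bR_i'(t^+) = \rho_i - 1 \leq -\eta$. If instead $\bQ_{i0}(s) = 0$ on $(t, t+\epsilon)$, Lemma~\ref{Lemma: circularity bound R_i} applied on the interval gives $\bR_{i-1}(s) \geq \bR_i(s)$ throughout; combined with $\bR_{i-1}(t) \leq V(t) = \bR_i(t)$ this forces $i-1 \in J(t)$ and $\bR_{i-1}'(t^+) \geq \bR_i'(t^+)$. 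Iterating this alternative and walking backwards around the ring, one eventually hits a station $i^\dagger$ with $\bQ_{i^\dagger,0}(s) > 0$ on a right neighborhood (otherwise all $\bQ_{j0}$ remain zero on $(t, t+\epsilon)$ and $V(t) = 0$, a contradiction). Chaining the inequalities yields $\bR_i'(t^+) \leq \bR_{i-1}'(t^+) \leq \cdots \leq \bR_{i^\dagger}'(t^+) = \rho_{i^\dagger} - 1 \leq -\eta$, and since this holds for every $i \in J(t)$, $D^+V(t) = \max_{i \in J(t)} \bR_i'(t^+) \leq -\eta$.

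The main obstacle is the derivative bound for argmax stations whose external queues are empty, since their residual work can grow due to customers being routed from busy upstream stations. The resolution is the backward-chain argument above: the ring structure encoded in Lemma~\ref{Lemma: circularity bound R_i} propagates the upper bound from a single busy argmax member to every other argmax member. This is the specific modification of Tassiulas's argument that the paper refers to, as it must accommodate both the internal-priority structure and the possibility of customers completing multiple full circuits around the ring, which inflates the residual work beyond what appears in the original setting.
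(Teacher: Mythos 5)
Your strategy is genuinely different from the paper's: you set up a Lyapunov function $V(t) = \max_i \bR_i(t)$ and try to bound the upper right Dini derivative $D^+V$ at each time, whereas the paper works backwards in time from a putatively nonzero time $t_0$ through a sequence of busy periods of the various $\bQ_{i0}$, applying Lemma~\ref{Lemma: circularity bound R_i} only at the left endpoints of those intervals and summing the guaranteed decrease of residual work over them. Both approaches hinge on the same structural lemma about the ring, and your differential-inequality route is closer in spirit to~\cite{tassiulas1996}, which the paper says it adapts.

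There is, however, a genuine gap in your case analysis. When $\bQ_{i0}(t) = 0$ you split into two cases: $\bQ_{i0}(\cdot) > 0$ on some right neighborhood of $t$, or $\bQ_{i0}(\cdot) \equiv 0$ on some right neighborhood of $t$. This is not exhaustive; a nonnegative Lipschitz function vanishing at $t$ need satisfy neither (think of $(s-t)^2 \sin^2\bigl(1/(s-t)\bigr)$), and your chain step needs Lemma~\ref{Lemma: circularity bound R_i} to hold throughout an interval $(t,t+\epsilon)$ in order to conclude $\bR_{i-1}'(t^+) \geq \bR_i'(t^+)$, so the argument as written fails in the oscillating case. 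The gap is closable: it suffices to prove $D^+V(t) \leq -\eta$ only at regular points $t$ (where every $\bR_j$, $\bQ_{j0}$, $\bI_j$ is differentiable), which form a full-measure set, and that is enough for the absolute-continuity conclusion $V(t) \leq (V(0) - \eta t)^+$. At such a regular $t$, the computation in the paper's proof of Lemma~\ref{Lemma: circularity bound R_i} gives $\bR_{i-1}(s) - \bR_i(s) = -\bQ_{i0}(s) + h(s)$, where $h$ is a fixed nonnegative linear combination of the $\bQ_{j0}(s)$. If $i-1, i \in J(t)$ and $\bQ_{i0}(t) = 0$, then $h(t) = 0$, so both $h$ and $\bQ_{i0}$ attain their minimum at $t$ and have zero derivative there, whence $\bR_{i-1}'(t) = \bR_i'(t)$ with no interval hypothesis. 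The paper's backward-in-time recursion sidesteps the issue entirely, since busy periods and their endpoints are unambiguous for any continuous nonnegative function, and the lemma is only ever invoked at a single time; that is the price the paper pays for a slightly more intricate bookkeeping of the times $t_n$ and the termination argument showing $t_m = 0$ for some finite $m$.
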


\begin{proof}
	Assume $\max_i\rho_i < 1$ and let the pair~$\bQ(\cdot), \bT(\cdot)$ be a 
	solution of the fluid model equations with $\norm{\bQ(0)} = 1$. We first 
	show that for this solution, the residual work for any station~$i$ 
	decreases at rate~$1-\rho_i$ during time intervals in which the 
	class~$(i,0)$ queue is continuously non-empty. To see this, note that 
	combining \eqref{Def: residual work} and~\eqref{fluid eq 1} gives $\bR(t) 
	= \bR(0) + \rho \, t - C \, \bT(t)$. So by~\eqref{fluid eq 4}, we can 
	express~$\bR_i(t)$ in terms of the cumulative idle time process of 
	station~$i$ as
	\begin{equation}\label{Eqn: change in R_i}
		\bR_i(t)
		= \bR_i(0) - (1-\rho_i) \, t + \bI_i(t).
	\end{equation}
	Since by the work-conserving property~\eqref{fluid eq 5} the cumulative 
	idle time process of station~$i$ cannot increase while the class~$(i,0)$ 
	queue is non-empty, it follows that
	\begin{equation}\label{Eqn: R_i decreases}
		\bR_i(s) - \bR_i(t) = (1-\rho_i) \, (t-s)
		\quad \text{if $\bQ_{i0}(\cdot)>0$ on~$(s,t\,]$.}
	\end{equation}

	Now suppose that for some $i_0 \in \{1,\dots,L\}$, $t_0>0$ 
	and~$\varepsilon > 0$ we have $\bQ_{i_00}(t_0) = \varepsilon$. Then 
	by~\eqref{Eqn: R_i} we have $\bR_{i_0}(t_0) \geq \varepsilon$, and using 
	that $\norm{\bQ(0)} = 1$ we also obtain $\max_i \bR_i(0) \leq (1+B)$, 
	where $B := \max_{ij} b_{ij}$. We claim that \eqref{Eqn: R_i decreases} 
	and Lemma~\ref{Lemma: circularity bound R_i} together imply
	\begin{equation}\label{Eqn: residual work time 0}
		\max\nolimits_i \bR_i(0)
		\geq \bR_{i_0}(t_0) + (1-\bar\rho) \, t_0
		= \varepsilon + (1-\bar\rho) \, t_0,
	\end{equation}
	where $\bar\rho := \max_i \rho_i < 1$. This yields $t_0 < \delta := (1+B) 
	(1-\bar\rho)^{-1}$, which proves that the fluid model satisfies 
	Definition~\ref{Def: fluid model stability} of stability with $\delta$ as 
	specified.

	It remains to prove~\eqref{Eqn: residual work time 0}. To this end, we 
	define
	\[
		t_1
		:= \sup\{ s\in [0,t_0] \colon \text{$s=0$ or $\bQ_{i_00}(s) = 0$} \},
	\]
	so that $(t_1,t_0]$ is the interval of maximal length ending at time~$t_0$ 
	during which the class~$(i_0,0)$ queue is continuously non-empty. Then 
	by~\eqref{Eqn: R_i decreases} and the fact that $\bR_{i_0}(t_0) \geq 
	\varepsilon$,
	\[
		\bR_{i_0}(t_1) \geq \varepsilon + (1-\bar\rho) \, (t_0-t_1).
	\]
	If $t_1=0$ this gives~\eqref{Eqn: residual work time 0}, and we are done. 
	If $t_1>0$ we proceed recursively, as we explain next.

	Suppose that for some~$n\geq1$, we have found an index~$i_{n-1}$ and 
	time~$t_n$ such that
	\[
		0<t_n<t_0, \quad
		\bQ_{i_{n-1}0}(t_n) = 0, \quad \text{and} \quad
		\bR_{i_{n-1}}(t_n) \geq \varepsilon + (1-\bar\rho) \, (t_0-t_n).
	\]
	Then at least one queue must be non-empty at time~$t_n$. We define~$i_n$ 
	as the index of the station nearest to, but before~$i_{n-1}$ on the ring, 
	such that~$\bQ_{i_n0}(t_n) > 0$; to be precise, we set $i_n := i$, where 
	$i$ is the index such that $\bQ_{i0}(t_n) > 0$ for which $(i_n + L - i) 
	\mod L$ is minimal. By Lemma~\ref{Lemma: circularity bound R_i} we then 
	have $\bR_{i_n} (t_n) \geq \bR_{i_{n-1}} (t_n)$. Analogously to how we 
	defined~$t_1$, we now set
	\[
		t_{n+1}
		:= \sup\{ s\in [0,t_n] \colon
				\text{$s=0$ or $\bQ_{i_n0}(s) = 0$} \},
	\]
	so that $\bQ_{i_n0} (\cdot) > 0$ on the time interval~$(t_{n+1},t_n]$. 
	Using~\eqref{Eqn: R_i decreases} we then obtain
	\[
		\bR_{i_n}(t_{n+1})
		\geq \bR_{i_n}(t_n) + (1-\bar\rho) \, (t_n-t_{n+1})
		\geq \varepsilon + (1-\bar\rho) \, (t_0-t_{n+1}),
	\]
	and we conclude that~\eqref{Eqn: residual work time 0} follows by 
	induction in~$n$, provided that $t_m = 0$ for some~$m\geq1$.

	To show this is indeed the case, note that we have $\bR_{i_n} (t_n) \geq 
	\varepsilon$ if $t_n>0$. It then follows from~\eqref{Eqn: R_i} that 
	$\bQ_{i0} (t_n) \geq \tau$ for some station~$i$, where $\tau := 
	\varepsilon /\nobreak (1+LB)$. But $\bQ_{i0} (\cdot)$ is Lipschitz-1 
	by~\eqref{fluid eq 2}, so $\bQ_{i0} (s)$ must be strictly positive for 
	all~$s$ in $( 0 \bmax (t_n - \tau), t_n]$. This implies that it cannot 
	possibly be the case that $t_{n+k} > 0 \bmax (t_n-\tau)$ for $k = 1,2, 
	\dots, L$. Because this is true for every~$n\geq1$ such that $t_n>0$, 
	$t_m$ must be~$0$ for some~$m\geq1$. This completes the proof.
\end{proof}

\section{Related models}
\label{sec: related models}

In the introduction we mentioned that our main result for the roundabout 
model, Theorem~\ref{Thm: (main) stability of the model}, also applies to the 
slotted-ring model in~\cite{vArem1990}, and proves a form of stability for 
that model under the stability condition. We explain this in more detail in 
the first part of this section. In the second part, we discuss how the model 
we introduced in Section~\ref{sec:model} differs from the roundabout model 
in~\cite{storm2020}, and provide a rigorous argument to show that this 
difference does not alter the global stability region. In the last part, we 
present ways in which the routing can be modified without affecting the 
validity of the proof of stability. In particular, we explain how our setting 
can also be used to cover formulations of multiclass networks with fixed 
routes.

\subsection{Slotted-ring model}
\label{subsec: slotted-ring model}

The slotted-ring model studied in~\cite{vArem1990} is a model in continuous 
time. It consists of a slotted ring, containing $c$~slots of equal length, 
that rotates with a constant rotation time~$\tau$, and $n$~stations located at 
fixed but arbitrary points on the ring. Packets arrive from outside to a queue 
at each station~$i$, and have a random destination station~$j$ chosen 
according to a fixed distribution. When an empty slot arrives at a station, 
the station can transmit a packet to the slot. This packet is removed from the 
slot as soon as at it reaches its destination. Due to the nature of the 
slotted-ring model, the stochastic process describing the model is in general 
not Markovian and cannot have a stationary limit distribution. For this 
reason, another form of stability called \emph{$\tau$-stability} was 
considered in~\cite{vArem1990}, which is defined as positive recurrence of the 
discrete-time Markov chain obtained by observing the process at the times~$t 
\,\tau$, with $t\in\Z_+$.

A crucial claim in~\cite{vArem1990} is that only the relative order of the 
stations, but not their exact positions, is relevant for $\tau$-stability. 
This freedom allows us to map to slotted-ring model to our roundabout model, 
as follows. Take $\tau/c$ as the unit of time, suppose that the times between 
arrivals of packets at station~$i$ are geometrically distributed with 
parameter~$p_i$, and assume there are no more stations than slots, i.e., 
$n\leq c$. Place the stations on the ring so that for $i=1,2,\dots,n-1$, the 
distance between stations $i$ and~$i+1$ is exactly the length of a slot. For 
the roundabout model, take $L=c$ and identify each slot with a cell. Let cells 
1 through~$n$ have external arrival rates $p_1,\dots,p_n$, and if $c>n$, set 
$p_{n+1},\dots,p_c$ equal to zero. Choose the parameters~$q_{ij}$ in the 
roundabout model so that the routing of packets in the slotted-ring model is 
reproduced (this requires setting certain~$q_{ij}$ equal to~1). Then the 
slotted-ring model observed at times~$t \mskip2mu \tau$ is equivalent to the 
roundabout model observed at times~$tL$, $t\in\Z_+$. It follows that the 
slotted-ring model is $\tau$-stable if and only if the roundabout model is 
stable.

The only issue that remains is that arrivals in the slotted-ring model 
in~\cite{vArem1990} were originally assumed to follow Poisson processes. This 
implies that there can be more than one arrival to each external queue in each 
unit of time, which we did not allow in the roundabout model. However, because 
of the strong law of large numbers, this does not fundamentally change the 
condition for stability. That is, suppose we allow the number of arrivals to 
queue~$i$ in one unit of time to follow a Poisson distribution with 
intensity~$\lambda_i$. Then the corresponding arrival process still converges 
in the fluid limit to the function $t \mapsto p_i\,t$, where $p_i = 
\lambda_i$. Therefore, as follows from the proof of Theorem~\ref{Thm: main thm 
convergence fluid limit}, we obtain the same fluid model, and hence the same 
stability condition, as for geometrically distributed interarrival times. We 
conclude that in the case $n\leq c$, Theorem~\ref{Thm: (main) stability of the 
model} proves $\tau$-stability of the slotted-ring model under the stability 
condition, without requiring the additional assumption (Assumption~1) that was 
made in~\cite{vArem1990}.

We now turn to the case~$n>c$. This case is more difficult, but we can handle 
it as follows. In the slotted-ring model, we still take $\tau/c$ as the unit 
of time, and we assume a Poisson arrival rate~$\lambda_i$ at station~$i$. We 
place the stations on the ring so that all distances between neighboring 
stations are equal. For the roundabout model, we take the number of cells~$L$ 
to be the least common multiple of $n$ and~$c$. We define $m := L/c$ and $k := 
L/n$, let $J$ be the set of indices $\{m, 2m, \dots, cm\}$, and let $K$ be the 
set of indices $\{k, 2k, \dots, nk\}$. The idea is that the positions of the 
slots at time~0 correspond to the cells with an index in~$J$, and the 
locations of the stations correspond to the cells with indices in~$K$. We 
therefore set the external arrival rates to $p_i := \lambda_{i/k} / m$ for 
$i\in K$ and $p_i := 0$ for $i\notin K$, and we assume the external queues at 
the cells with an index not in~$K$ are empty at time~0 (and hence at all 
times).

The delicate part is to reproduce the correct routing from the slotted-ring 
model in the roundabout model. To achieve this, we further assume that in the 
initial state each cell~$i$ that does not correspond to a slot (i.e., with 
$i\notin J$) is occupied by a vehicle of a type~$j$ not in~$K$, and that these 
types of vehicle never leave the system (that is, they are simply forwarded to 
the next cell at each time step). This has as a consequence that $b_{ij}$ is 
infinite for~$j\notin K$, but this does not matter for the analysis because 
$p_j=0$. With these restrictions in place, the parameters~$q_{ij}$ of the 
roundabout model are zero for $j\notin K$, and we can choose the remaining 
parameters~$q_{ij}$ such that the routing is the same as in the slotted-ring 
model. Crucially, the slotted-ring model observed at times~$t \mskip2mu \tau$ 
is then again equivalent to the roundabout model observed at times~$tL$ ($t\in 
\Z_+$), so that $\tau$-stability of the former model is equivalent to 
stability of the latter.

It is not difficult to see that with this setup, taking the fluid limit of the 
multiclass network associated with the roundabout model leads to the same 
fluid model as before. However, there are two additional constraints. First, 
every fluid limit has the property that $\bQ_{j0}(\cdot) \equiv 0$ for 
$j\notin K$. Second, because an exact fraction $1-m^{-1}$ of the cells in the 
roundabout model is continuously occupied by vehicles of types not in~$K$, 
every fluid limit satisfies $\sum_{j\notin K} \bT_{ij}(t) = t-tm^{-1}$ for $i 
= 1,2,\dots,L$ and all times~$t$. In the proof of stability of the fluid 
model, we should therefore only consider fluid model solutions that satisfy 
both constraints.

Let us investigate what this means for the proof of Theorem~\ref{Thm: 
stability fluid limit model}. For a meaningful analysis, in the residual work 
processes we should now only take into account customers of the classes 
$(j,0)$ and~$(i,j)$ with $j\in K$. That is, the residual work process of 
station~$i$ in the multiclass network is now given by
\[
	\bR_i(t) = \bQ_{i0}(t) + \sum_{j\in K} b_{ij} \, \bQ_{j0}(t),
\]
and it follows from \eqref{fluid eq 1} and~\eqref{fluid eq 4} that~\eqref{Eqn: 
change in R_i} is replaced by
\[
	\bR_i(t) = \bR_i(0) - (m^{-1}-\rho_i) \, t + \bI_i(t).
\]
We conclude that the proof of Theorem~\ref{Thm: stability fluid limit model} 
still goes through, but we now obtain stability of the fluid model under the 
stability condition $\max_i\rho_i < m^{-1}$. But this is exactly the desired 
condition for $\tau$-stability of the slotted-ring model, because in terms of 
the~$\lambda_i$ it is equivalent to
\[
	\lambda_{i/k} < 1 - \sum_{j\in K} b_{ij} \, \lambda_{j/k}
	\quad \text{for all~$i\in K$}.
\]

\subsection{Roundabout model}
\label{subsec: roundabout model}

We now explain our reasons for considering a slightly different model than the 
one in~\cite{storm2020}. In the roundabout model in~\cite{storm2020}, a 
vehicle that arrives to an empty queue and cell at time~$t \in \Z_+$ enters 
the roundabout immediately, and is therefore not in the queue at time~$t$ or 
time~$t+1$. This poses a problem for the multiclass network formulation of the 
model: in the multiclass network, a vehicle must have been in a queue for one 
unit of time in order to receive service.

Since our proof relies on coupling the roundabout model to a multiclass 
network, we have slightly modified the model here so that an arriving vehicle 
enters a queue first, and is allowed to enter the roundabout no sooner than 
one time unit later. We emphasize that this model alteration does not affect 
the stability condition: Theorem~\ref{Thm: (main) stability of the model} also 
applies to the model in~\cite{storm2020}. This follows from the fact that the 
two models can be coupled in such a way that their sample paths stay close 
together at all times. The precise result is stated in the proposition below, 
the proof of which is presented in Appendix~\ref{sec: appendix equality markov 
chains}. We conclude that the Markov chain~$X^1(\cdot)$ is positive recurrent 
if and only if the Markov chain for the model in~\cite{storm2020} is positive 
recurrent. 

\begin{proposition}\label{Prop: coupling old new model}
	Let $\tilde X(\cdot) := \{ \tilde X(t)\colon t \in \Z_+\}$ denote the 
	discrete-time Markov chain associated with the model in~\cite{storm2020}. 
	There exists a coupling between $X^1(\cdot)$ and~$\tilde X(\cdot)$ in 
	which at all times the states of all cells are the same for the two 
	processes, while the difference in queue length is at most~1 for each 
	queue.
\end{proposition}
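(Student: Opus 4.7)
The plan is to construct an explicit Markovian coupling between $X^1(\cdot)$ and $\tilde X(\cdot)$, both started from the empty state, and then to verify by induction on $t$ that at every time step the cell states coincide while the queue lengths differ by at most one in each component.

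I would set up the coupling by defining both processes on a common probability space driven by shared sources of randomness: i.i.d.\ Bernoulli$(p_i)$ variables $A_i(t)$ that trigger an arrival to queue $i$ during each time slot, and i.i.d.\ Bernoulli$(q_{ij})$ variables $D_{ij}(t)$ that trigger departure of a type-$j$ vehicle located at cell $i$. Both chains are updated simultaneously using the same realizations, so the only source of divergence between them is the way $\tilde X$ handles what I would call a \emph{fast arrival}: an arrival indicator that fires at queue $i$ when both queue $i$ and cell $i$ are empty. In $\tilde X$ such a vehicle is routed directly onto the ring, whereas in $X^1$ the vehicle must sit in the queue and enters the ring one time step later.

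The induction then proceeds case by case. The base case is immediate. For the inductive step, I would analyze each index $i$ separately under the three local dynamics cases (cell empty with queue empty, cell empty with queue non-empty, cell occupied), combined with the value of $A_i(t)$ and $D_{ij}(t)$. In every configuration other than a fast arrival, the two chains undergo identical updates and the invariant is trivially preserved. When a fast arrival fires, $X^1$ momentarily acquires one extra vehicle in queue $i$ relative to $\tilde X$, opening a one-unit gap; I would verify that at the next time step this extra vehicle in $X^1$ moves onto the ring and lands in the same cell that $\tilde X$'s fast-arrival vehicle traveled to, so the gap closes again and the cell-state invariant is preserved going forward. Crucially, this requires me to align the routing randomness so that $\tilde X$'s fast-forwarded vehicle consumes the departure draws that $X^1$'s delayed vehicle would use one step later. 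This alignment is possible precisely because once a fast arrival has fired, the two corresponding vehicles traverse exactly the same sequence of cells, shifted by one time step.

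The main obstacle will be the bookkeeping required to check that no new discrepancy in the cell states or queue gap of size greater than one can open up even when multiple fast arrivals fire simultaneously at different indices, or when a fresh arrival occurs at a queue that already has one extra vehicle relative to $\tilde X$. To handle these interactions cleanly, I would maintain an explicit correspondence between fast-arrival vehicles in $\tilde X$ and the momentarily delayed counterparts in $X^1$'s queues, and invoke the priority of the ring over the external queues together with the work-conserving nature of the dynamics to rule out further divergence. Once the case analysis is complete, the invariant holds at every time step by induction, which yields the stated coupling and proves the proposition.
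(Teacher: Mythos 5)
Your coupling is set up the wrong way around, and as described the cell-state invariant would immediately fail. You let both chains consume the same arrival variable $A_i(t)$ at the same time slot $t$. But after a fast arrival at time $t$, the fast-forwarded vehicle appears at cell~$i+1$ in $\tilde X$ at time~$t+1$, while in $X^1$ the same arrival lands in queue~$i$ at time~$t+1$ and reaches cell~$i+1$ no earlier than time~$t+2$. So at time~$t+1$, cell~$i+1$ holds a type-$i$ vehicle in $\tilde X$ and is empty in $X^1$, and at time~$t+2$ the mismatch recurs in the opposite direction (the $\tilde X$ vehicle has already moved on). You acknowledge that the two vehicles trace ``the same sequence of cells, shifted by one time step,'' but that is precisely the statement that the cell states are \emph{not} equal at any fixed time, contradicting the conclusion you want to prove. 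Shifting the departure draws $D_{ij}$ (as you propose) does not repair this; it only keeps the two trajectories in lock-step with each other, not the two cell-state vectors.

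The idea you are missing is that the \emph{arrival} randomness, not the departure randomness, must be shifted by one time step between the two chains. The paper couples the chains so that the same uniform variable $U_{i0}(s)$ governs the arrival that lands in $X^1$'s queue~$i$ at time~$s$ and the arrival that $\tilde X$ processes one transition \emph{later}, when updating from time~$s$ to~$s+1$. Equivalently, the initial states are coupled with a head start, $Q^1_i(0) = \tilde Q_i(0) + \one{U_{i0}(0)\le p_i}$. With this offset, both chains place a newly arrived vehicle on cell~$i+1$ at the same time, the cell states coincide identically for all~$t$, and the queue discrepancy is not merely ``at most~1'' but is exactly $\one{U_{i0}(t)\le p_i}$, which is what makes the induction close in a single line. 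Once you have this invariant, the departure randomness aligns automatically because the cells are in the same joint state, so there is no need for the separate bookkeeping of paired vehicles and consumed draws that your proposal builds. Without identifying this shift, the case analysis you outline cannot establish the cell-state equality the proposition asserts.
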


\subsection{Models with other types of routing}
\label{subsec: model extensions}

We designed the multiclass network in Section~\ref{sec: multiclass network 
description} to mimic the routing of vehicles in the roundabout model. We will 
refer to this type of routing as `probabilistic routing'. This type of routing 
enables us to identify customers classes by customers' location and type.

A frequently used alternative in queueing networks is to fix a set of 
(deterministic) routes that customers can take through a network, and identify 
each customer class with one of these routes (see, e.g.,~\cite{bramson2}). 
These fixed routes are generally not included in the possible set of routes 
obtained from probabilistic routing, but our setting can be modified to handle 
this alternative type of routing. This is possible because in our setting, a 
route is determined by the location of the queue where the customer arrives 
together with the time the customer intends to spend in the network. That is, 
to replace probabilistic routing by fixed routes, we only need to change the 
distributions of the variables~$T_j$ we introduced in Section~\ref{sec: 
preliminaries}. This of course leads to different formulas for the marginal 
stationary probabilities~$\pi_{ij}$, but Lemma~\ref{Lemma: EN_ij} remains 
valid. To establish the pathwise coupling with a multiclass network, we need 
that only finitely many customer classes are required to describe all routes. 
In that case, the statement of Theorem~\ref{Thm: (main) stability of the 
model} and its proof go through, with a different expression for the 
$\pi_{ij}$, leading to a rigorous derivation of the global stability region.

Both formulations of the routing (and associated customer classes) have their 
advantages. On the one hand, fixed routes allow for customer behavior that is 
impossible with probabilistic routing. For instance, using fixed routes, we 
can have a class of customers who arrive to queue~$1$ and always complete 
exactly two full circles before leaving the roundabout at cell/station~$2$. 
This is not possible with probabilistic routing. On the other hand, with 
probabilistic routing, the time that customers can spend in the system is 
unbounded. With fixed (deterministic) routes, this time is necessarily 
bounded, since we can only handle a finite number of customer classes (i.e., 
routes of finite length) in the multiclass network.

A further model generalization is to combine probabilistic routing with fixed 
routes. That is, we could allow customers arriving at a station~$j$ to first 
follow a fixed route through the network (chosen from a finite set), and then 
either leave the network (with a certain probability), or continue according 
to probabilistic routing with parameters~$q_{ij}$. Again, this generalization 
only changes the distribution of the variable~$T_j$, so our methods still 
apply. But because the model formulation in Section~\ref{sec:model} already 
covers a broad range of applications in communication systems and 
transportation networks, we have chosen not to work with this more general 
setting.

\section{Concluding remarks}
\label{sec: conclusions}

In this paper, we have considered a ring-topology stochastic network with 
queues, involving different customer classes and a policy with a priority 
structure. A careful consideration of its marginal stationary distribution led 
us to a condition which we have proven to be necessary and sufficient for 
stability. In our proof, we coupled the sample paths of our model to those of 
a multiclass queueing network, enabling us to appeal to fluid model techniques 
to prove sufficiency of the stability condition. The approach we used 
explicitly exploited the relationship between the condition for stability and 
the rate at which different segments of the ring are occupied in case the 
model is stable and displays ergodic behavior.

Our proof of stability of the fluid model was based on~\cite{tassiulas1996}, 
where it was shown that \emph{any} work-conserving policy stabilizes the 
considered system. This raises the question whether our stability result holds 
under any work-conserving policy as well. We believe the answer is 
affirmative. To be more precise, without the priority structure we imposed, a 
fluid limit of our model in general may not satisfy equation~\eqref{fluid eq 
6}. This means we obtain a fluid model given by~\eqref{fluid eq 
1}--\eqref{fluid eq 5}, with $\bQ_{i0}(t)$ in equation~\eqref{fluid eq 5} 
replaced by~$\sum_{j=0}^L \bQ_{ij}(t)$. Proving stability of this fluid model 
then requires a similar replacement in Lemma~\ref{Lemma: circularity bound 
R_i} and the proof of Theorem~\ref{Thm: stability fluid limit model}, but we 
do believe that after making the appropriate modifications the proof will 
still go through.

With our main theorem on the model’s stability region, we can precisely 
quantify the capacity of associated communication or transportation systems. 
In practice, however, scenarios can occur in which the system is not stable. 
This often happens during certain periods such as rush hours in road traffic 
applications, or specific busy hours in communication networks. In that case, 
one is interested in determining for instance which queues will remain finite 
in length, and which ones will grow indefinitely, and at what rate, during 
these periods.

To address this problem, suppose we have $\rho_i > 1$ for some station~$i$. 
Then for any fluid model solution, $\bR_i(t)$ and hence also~$\norm{\bQ(t)}$ 
increases at least linearly in~$t$, by~\eqref{Eqn: change in R_i}. Following 
Dai~\cite{Dai1996a}, this is enough to conclude that in the multiclass 
network, with probability~1, $\lim_{t\to\infty} \norm{\bQ(t)} = \infty$. 
(Technically, we should consider a different fluid limit here, but it has the 
same fluid model except that~$\norm{\bQ(0)} = 0$; see~\cite{Dai1996a}.) It 
follows that if $\bar\rho := \max_i \rho_i > 1$, then our Markov chain is 
transient. Since we know the Markov chain is positive recurrent when 
$\bar\rho<1$, this only leaves the case~$\bar\rho=1$, which we believe is 
inaccessible with the methods we use here.

The next question we address is what we can say about the individual queues in 
the transient regime. Assuming ergodic behavior in the occupation of the 
cells, let $\tilde\pi_{i0}$ be the stationary probability at which cell~$i$ is 
empty. Then the ergodic rate at which type-$j$ vehicles enter the ring should 
be the smaller of $p_j$ and~$\tilde\pi_{j0}$, depending on whether queue~$j$ 
does not or does blow up, respectively. Following the arguments of 
Proposition~\ref{Prop: marginal distribution} and Lemma~\ref{Lemma: EN_ij}, 
the marginal stationary probability that cell~$i$ is occupied by a type-$j$ 
vehicle will then be given by $\tilde\pi_{ij} = b_{ij} \, (\tilde\pi_{j0} 
\bmin p_{j})$. Since $\sum_{j=0}^L \tilde\pi_{ij} = 1$ for every~$i$, it 
follows that the~$\tilde\pi_{i0}$ must satisfy
\begin{equation}\label{Eqn: unstable system}
	\tilde\pi_{i0}
	= 1 - \sum_{j=1}^L b_{ij} \, (\tilde\pi_{j0} \bmin p_j),
	\qquad i = 1,\dots,L.
\end{equation}
Therefore, solving this system of equations identifies the possible candidates 
for the ergodic behavior of our model in the transient regime.

To elaborate, suppose that for a given vector~$p$, \eqref{Eqn: unstable 
system} has a unique solution satisfying $p_i\neq \tilde\pi_{i0}$ for all~$i$ 
(again, we expect our methods cannot deal with boundary cases where $p_i = 
\tilde\pi_{i0}$ for some~$i$). Let $U$ and~$S$ be the sets of indices~$i$ for 
which $p_i > \tilde\pi_{i0}$ and $p_i < \tilde\pi_{i0}$, respectively. Note 
that by~\eqref{Eqn: pi_ij} and Lemma~\ref{Lemma: EN_ij}, the~$\pi_{i0}$ 
solve~\eqref{Eqn: unstable system} if the system is stable, so that we must 
have~$U\neq \varnothing$ in the transient regime. We call the queues in~$U$ 
unstable and the ones in~$S$ stable. The intuition behind this is that 
for~$i\in U$, we expect queue~$i$ to grow (eventually) at the asymptotic 
rate~$p_i-\tilde\pi_{i0}$. This means for our model that the unstable queues 
eventually always have a vehicle on offer to send onto the ring. Their precise 
length is therefore irrelevant in the long run, so we may as well omit these 
queues from our state space and describe the situation with a new Markov chain 
that only keeps track of the states of the stable queues and all the cells; we 
refer to~\cite{Adan2020} for the theory behind such a setup. We then expect 
this new Markov chain to be stable because $p_i < \tilde\pi_{i0}$ for 
all~$i\in S$, in analogy with our main Theorem~\ref{Thm: (main) stability of 
the model}.

We study the solution set of the system~\eqref{Eqn: unstable system} and its 
implications for the ergodic behavior of the queues in the unstable regime in 
a forthcoming paper~\cite{kager2022}.

\bibliographystyle{abbrv}
\bibliography{Stability}
\newpage

\begin{appendices}
\section{Equivalence of cellular automata and multiclass network formulation}
\label{sec: appendix equality markov chains}

This appendix is devoted to proving Lemma~\ref{Lemma: coupling} and 
Proposition~\ref{Prop: coupling old new model}. We start with the former.

\begin{proof}[Proof of Lemma~\ref{Lemma: coupling}]
	We have to provide a coupling between the Markov chains $X^1(\cdot)$ 
	and~$X^2(\cdot)$ and a bijection $f \colon \cS^1 \to \cS^2$ such that, for 
	every $\omega \in \Omega$, if $f\bigl( X^1(\omega,0) \bigr) = 
	X^2(\omega,0)$, then $f\bigl( X^1(\omega,t) \bigr) = X^2(\omega,t)$ for 
	every $t \in \Z_+$. The crux of the proof is that we use the same 
	collection of uniform random variables to construct the sample paths of 
	both processes explicitly, for given initial states $X^1(0)$ and~$X^2(0)$. 
	This establishes the coupling, after which we define the bijection~$f$ and 
	verify that the coupling has the desired property. Throughout the proof, 
	we suppose that we work with an arbitrary, but fixed set of parameters 
	$p_i$ and~$q_{ij}$, $i,j \in \{1,\ldots,L\}$.

	The first step is to construct the coupling of the two processes 
	$X^1(\cdot)$ and~$X^2(\cdot)$. For~$X^1(\cdot)$, we denote the states of 
	cell~$i$ and queue~$i$ at time~$t$ by $C^1_i(t)$ and~$Q^1_i(t)$, 
	respectively. Likewise, for~$X^2(\cdot)$ we denote the length of the queue 
	for class~$(i,j)$ customers at time~$t$ by $Q^2_{ij}(t)$.

	Next we introduce, for each $t \in \Z_+$, $L^2+L$ random variables~$U_{ij} 
	(t)$, where $(i,j) \in \cK$. We assume the~$U_{ij} (t)$ are all 
	independent and uniformly distributed on~$(0,1)$. The idea is that, for $i 
	\in \{1,\ldots,L\}$ and $t \in \Z_+$, the realization of the random 
	variable~$U_{i0}(t)$ determines whether a vehicle arrives at queue~$i$ in 
	the roundabout model, and also whether a class~$(i,0)$ customer arrives in 
	the multiclass network. Likewise, for $i,j \in \{1,\ldots,L\}$, the 
	realization of the random variable~$U_{ij}(t)$ determines whether a 
	type-$j$ vehicle departs from cell~$i$ (if present), and simultaneously 
	determines the routing of a class~$(i,j)$ customer.

	First, to construct the process~$X^1(\cdot)$, let the initial 
	state~$X^1(0) \in\nobreak \cS^1$ be given. Then it suffices to state how 
	the queues and cells have to be updated from one unit of time to the next 
	to uniquely determine the sample paths of the process. So let $t \geq 0$. 
	From cases~1--3 in Section~\ref{sec:model}, it is readily verified that 
	the following equations update the states of cell~$i +\nobreak 1$ and 
	queue~$i$ correctly, where $i+1$ has to be read as~1 in case $i=L$:
	\begin{align}
		& Q^1_i(t+1) = Q^1_i(t) - \one{Q^1_i(t)>0} \one{C^1_i(t)=0}
			+ \one{U_{i0}(t+1)\leq p_i},
			\label{eq: CA update queue}\\
		& C^1_{i+1}(t+1) = i\, \one{Q^1_i(t)>0} \one{C^1_i(t)=0}
			+ {\textstyle\sum_{j=1}^L} \, j\, \one{C^1_i(t) = j} 
			\one{U_{ij}(t) > q_{ij}}.
			\label{eq: CA update cell}
	\end{align}
	Indeed, these equations determine the states of each cell and queue in the 
	system, for each $t \geq 0$, in terms of $X^1(0)$ and the uniform random 
	variables~$U_{ij}(t)$.

	Similarly, to construct~$X^2(\cdot)$, again let the initial state~$X^2(0) 
	\in\nobreak \cS^2$ be given. Stating the one-step update rule is again 
	sufficient to determine the sample paths of~$X^2(\cdot)$. For $t \geq 0$, 
	from the definition of the multiclass network in Section~\ref{sec: 
	multiclass network description}, we see that taking
	\begin{align}
		& Q^2_{i0}(t+1) = Q^2_{i0}(t)
			- \one{Q^2_{i0}(t)>0} \one{\sum_{j=1}^L Q^2_{ij}(t) = 0}
			+ \one{U_{i0}(t+1) \leq p_i},
			\label{eq: QN update queue} \\
		& Q^2_{i+1,j}(t+1) = \delta_{ij} \, \one{Q^2_{i0}(t)>0}
			\one{\sum_{\ell=1}^L Q^2_{i\ell}(t)=0}
			+ Q^2_{ij}(t) \, \one{U_{ij}(t) > q_{ij}}
			\label{eq: QN update cell}
	\end{align}
	does the job. Here $i,j \in \{1,\ldots,L\}$ and $\delta_{ij}$ is the 
	Kronecker delta function, which is equal to one if $i = j$, and is zero 
	otherwise. This establishes the coupling between $X^1(\cdot)$ 
	and~$X^2(\cdot)$.

	The next step is to provide a bijection $f \colon \cS^1 \to \cS^2$ such 
	that $f\bigl( X^1(\omega,0) \bigr) = X^2(\omega,0)$ implies
	\begin{equation}
		\label{Eqn: bijective property coupling (proof)}
		f\bigl( X^1(\omega,t) \bigr) = X^2(\omega,t)
		\qquad \text{for all $t \in \Z_+$.}
	\end{equation}
	Such a bijection is given by the function~$f$ that maps a vector 
	in~$\cS^1$ component-wise to~$\cS^2$ with component functions $f_i \colon 
	\Z_+ \times\nobreak \{0,1,\dots,L\} \rightarrow \Z_+ \times\nobreak 
	\mathcal{V}$ for $i \in \{1,\dots,L\}$ given by
	\[
		(Q^1_i,C^1_i) \mapsto
		\bigl( Q^1_i, \one{C_i^1=1}, \one{C_i^1=2}, \dots, \one{C_i^1=L} 
		\bigr).
	\]
	In words, for each $i \in \{1,\ldots,L\}$, the function~$f_i$ maps the 
	states of queue and cell~$i$ from~$X^1(\cdot)$ to the states of all queues 
	at station~$i$ from~$X^2(\cdot)$. This function is a bijection since the 
	function $f^{-1} \colon \cS^2 \to \cS^1$, with component functions 
	$f_i^{-1} \colon \Z_+ \times\nobreak \mathcal{V} \to \Z_+ \times\nobreak 
	\{0,1,\dots,L\}$ given by
	\[
		\bigl( Q^2_{i0}, Q^2_{i1}, \dots, Q^2_{iL} \bigr) \mapsto
		\bigl( Q^2_{i0}, {\textstyle\sum_{j=1}^L} \, j \, Q^2_{ij} \bigr),
	\]
	is an inverse for~$f$. It is now readily verified that $f\bigl( 
	X^1(\omega,t) \bigr) = X^2(\omega,t)$ is equivalent to
	\[
		Q_i^1(\omega,t) = Q_{i0}^2(\omega,t)
		\quad\text{and}\quad
		\one{C_i^1(\omega,t)=j} = Q^2_{ij}(\omega,t)
		\qquad\text{for all $i,j \in \{1,\ldots,L\}$.}
	\]
	Therefore, it immediately follows from \eqref{eq: CA update 
	queue}--\eqref{eq: QN update cell} that for every $t \geq 0$,  if $f\bigl( 
	X^1(\omega,t) \bigr) = X^2(\omega,t)$, then $f\bigl( X^1(\omega,t+1) 
	\bigr) = X^2(\omega,t+1)$. Hence, if we assume $f\bigl( X^1(\omega,0) 
	\bigr) = X^2(\omega,0)$, then by induction we obtain~\eqref{Eqn: bijective 
	property coupling (proof)}, and the proof is complete.
\end{proof}

\begin{proof}[Proof of Proposition~\ref{Prop: coupling old new model}.]
	We begin by constructing the sample paths of~$\tilde X(\cdot)$ in terms of 
	the uniform random variables~$U_{ij}(t)$ from the previous proof of 
	Lemma~\ref{Lemma: coupling}. To this end, let $\tilde C_i(t)$ and~$\tilde 
	Q_i(t)$ denote, respectively, the state of cell~$i$ and length of 
	queue~$i$ at time $t \in \Z_+$ in the Markov chain~$\tilde X(\cdot)$. It 
	is then readily verified that for all $i \in \{1,\ldots,L\}$, the 
	following rules update the states of the cells and queues correctly and 
	determine the sample paths of~$\tilde X(\cdot)$, given the initial 
	state~$\tilde X(0)$ (as before, $i+1$ has to be read as~1 in case 
	$i=L$):
	\begin{align}
		& \tilde Q_i(t+1) = \tilde Q_i(t)
			- \one{\tilde Q_i(t)>0} \one{\tilde C_i(t)=0} \one{U_{i0}(t)>p_i}
			+ \one{\tilde C_i(t)>0} \one{U_{i0}(t)\leq p_i},
			\label{Eqn: old model queue} \\
		& \tilde C_{i+1}(t+1) = i \, \one{\tilde Q_i(t)>0}
			\one{\tilde C_i(t) = 0}
			+ i \,  \one{\tilde Q_i(t)=0} \one{\tilde C_i(t) = 0} 
			\one{U_{i0}(t) \leq p_i} \nonumber \\
		& \phantom{\tilde C_{i+1}(t+1) = \null}
			+ {\textstyle\sum_{j=1}^L} \, j \, \one{\tilde C_i(t)=j} 
			\one{U_{ij}(t)>q_{ij}}.
			\label{Eqn: old model cell}
	\end{align}

	As in the proof of Lemma~\ref{Lemma: coupling}, we have now coupled the 
	dynamics of $X^1(\cdot)$ and~$\tilde X(\cdot)$ in terms of the 
	variables~$U_{ij}(t)$. In addition, assuming the initial state~$\tilde 
	X(0)$ of~$\tilde X(\cdot)$ is given, we couple the initial states of the 
	two processes by setting $Q^1_i(0) := \tilde Q_i(0) + \one{U_{i0}(0) \leq 
	p_i}$ and $C^1_i(0) := \tilde C_i(0)$ for all $i \in \{1,\ldots,L\}$. We 
	claim that for all $t \in \Z_+$ and $i \in \{1,\ldots,L\}$,
	\begin{equation}
		\label{Eqn: coupling 2 property}
		Q^1_i(t) - \tilde Q_i(t) = \one{U_{i0}(t)\leq p_i}
		\quad\text{and}\quad
		C^1_i(t) - \tilde C_i(t) = 0,
	\end{equation}
	from which the result follows. We prove this claim with induction. 
	Clearly, \eqref{Eqn: coupling 2 property}~holds for $t = 0$ by the 
	coupling, so assume it holds for $t \in \Z_+$. Then, using the fact that 
	\eqref{Eqn: coupling 2 property} implies
	\[
		\one{Q^1_i(t)>0} = \one{\tilde Q_i(t)>0} + \one{\tilde Q_i(t)=0} 
		\one{U_{i0}(t)\leq p_i},
	\]
	it follows from \eqref{eq: CA update queue}--\eqref{eq: CA update cell} 
	and \eqref{Eqn: old model queue}--\eqref{Eqn: old model cell} that 
	\eqref{Eqn: coupling 2 property} also holds at time~$t+1$.
\end{proof}

\section{Fluid limits}
\label{sec: appendix convergence fluid limit}

In this appendix we prove Theorem~\ref{Thm: main thm convergence fluid limit} 
and Proposition~\ref{Prop: original Markov chain pos rec if fluid limit model 
is stable}. The proof of Theorem~\ref{Thm: main thm convergence fluid limit} 
requires two lemmas about sequences of functions that converge uniformly on a 
compact domain.

\begin{lemma}
	\label{Lemma: fLLN}
	If the function $f\colon \R_+ \to\nobreak \R_+$ is non-decreasing and for 
	some $a \in \R_+$, the sequence $\{ f(n)/n \colon n\in\N \}$ converges 
	to~$a$ as $n \to\nobreak \infty$, then $\sup_{t\in[0,m]} \abs{ f(nt)/n - 
	at } \to\nobreak 0$ for every~$m\in\N$.
\end{lemma}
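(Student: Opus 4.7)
The plan is to reduce uniform convergence on $[0,m]$ to pointwise convergence at finitely many grid points, using the monotonicity of $f$ to control the oscillation between grid points. This is essentially the classical Pólya-type argument: pointwise convergence of a sequence of monotone functions to a continuous monotone limit implies uniform convergence on compact sets.

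First, I would establish the preliminary fact that $f(x)/x \to a$ as $x \to \infty$ over all real arguments, not just integers. For $x \ge 1$, letting $n = \lfloor x \rfloor$, monotonicity yields $f(n) \le f(x) \le f(n+1)$, hence
\[
\frac{f(n)}{n}\cdot\frac{n}{n+1} \;\le\; \frac{f(x)}{x} \;\le\; \frac{f(n+1)}{n+1}\cdot\frac{n+1}{n},
\]
and both outer expressions tend to $a$ as $x\to\infty$ by the hypothesis.

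Next, fix $\varepsilon>0$ and $m\in\N$, choose $K\in\N$ with $am/K<\varepsilon/2$, and set $t_k := km/K$ for $k=0,1,\dots,K$. For any $t\in[t_{k-1},t_k]$, monotonicity of $f$ gives $f(nt_{k-1})\le f(nt)\le f(nt_k)$, and since also $at_{k-1}\le at\le at_k$, I obtain
\[
\left|\frac{f(nt)}{n}-at\right| \;\le\; \max_{0\le k\le K}\left|\frac{f(nt_k)}{n}-at_k\right| + a(t_k-t_{k-1}).
\]
The second term is at most $am/K<\varepsilon/2$ by choice of $K$. For the first term, pointwise convergence $f(nt_k)/n\to at_k$ holds for each fixed $k$: when $t_k=0$ this is trivial, and when $t_k>0$ it follows from the preliminary fact applied to $x=nt_k\to\infty$, since $f(nt_k)/n = t_k\cdot f(nt_k)/(nt_k)$. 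As there are only $K+1$ grid points, I can take $n$ sufficiently large to force each pointwise error below $\varepsilon/2$, giving the uniform bound $\varepsilon$.

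There is no real obstacle here; the only subtlety is that the hypothesis provides convergence only along integer $n$, whereas $nt_k$ need not be an integer, which is precisely why the preliminary step is required. Everything else is routine bookkeeping.
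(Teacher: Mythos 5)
Your proof is correct. The underlying idea is the same as the paper's (use monotonicity of $f$ to sandwich $f(nt)$ between values at nearby points where the limit is controlled), but the implementation differs in a meaningful way. You work with a \emph{fixed} grid $\{km/K : k = 0, \dots, K\}$ whose mesh is chosen once from $\varepsilon$, and reduce to pointwise convergence at $K+1$ points; to do this you need the preliminary step extending $f(n)/n \to a$ from integer to real arguments. The paper instead uses the $n$-dependent grid $\{k/n\}$ (mesh $1/n$), which lets it invoke the integer-argument hypothesis directly at $k = \lfloor nt\rfloor$ and $k+1$; but because the multiplicative factor $k/n$ can be tiny near $t=0$, it must split $[0,m]$ into $[0,\delta_n]$ and $[\delta_n,m]$ using an auxiliary sequence $t_n \to \infty$ with $t_n/n \to 0$ and bound the first region by a crude monotonicity estimate. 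Your route avoids the two-region split and the auxiliary sequence at the cost of one extra sandwiching argument up front; it is the more standard P\'olya-type finiteness argument and arguably cleaner, while the paper's is slightly more direct in that it never leaves integer arguments.
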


\begin{proof}
	Choose a sequence $t_1, t_2, \ldots$ in~$\N$ that diverges to~$\infty$ but 
	so that $t_n/n \to 0$ as $n\to\infty$, and write $\delta_n := t_n/n$. 
	Since $f$ is non-decreasing, $\abs{ f(nt)/n - at }$ is bounded for all~$t 
	\in [0,\delta_n]$ by $( f(t_n)/t_n + a) \delta_n$. Moreover, since for 
	all~$k \in \N$ and all~$t$ in the interval~$\bigl[ k/n, (k+1)/n \bigr]$,
	\[
		\frac{k}{n} \Bigl( \frac{f(k)}{k} - a \Bigr) - \frac{a}{n}
		\leq \frac{f(nt)}{n} - at
		\leq \frac{k+1}{n} \Bigl( \frac{f(k+1)}{k+1} - a \Bigr)
		+ \frac{a}{n},
	\]
	$\abs{ f(nt)/n - at }$ is bounded for all~$t \in [\delta_n, m]$ by the 
	maximum of $m \, \abs{ f(k)/k-a } + a/n$ taken over all~$k \in \{t_n, 
	t_n+1, \dots, mn\}$. Hence, the result follows from $f(n)/n \to a$ and 
	$\delta_n \to 0$.
\end{proof}

\begin{lemma}
	\label{Lemma: uoc convergence of sequences}
	Let $m\in\N$, and let $\{f_n\}$ and~$\{g_n\}$ be sequences of Lipschitz-1 
	functions from~$\R_+$ to~$\R_+$ that converge uniformly on the 
	domain~$[0, m]$ to the Lipschitz-1 functions $f$ and~$g$, respectively, 
	where $g$ and all~$g_n$ are non-decreasing and satisfy $g(0) = g_n(0) = 
	0$. Then
	\begin{enumerate}
		\item[(a)] $\sup_{t\in[0,m]} \absbig{ f_n(g_n(t)) - f(g(t)) }\to 0$ as 
			$n\to\infty$;
		\item[(b)] $\int_0^m f_n(t)\diff g_n(t)\to \int_0^m f(t)\diff g(t)$ as 
			$n\to\infty$.
	\end{enumerate}
\end{lemma}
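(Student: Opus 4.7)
The plan is to handle parts (a) and (b) separately, exploiting the Lipschitz-1 property to control the relevant differences by the quantities
\[
\varepsilon_n := \sup_{s\in[0,m]}|f_n(s)-f(s)| \quad\text{and}\quad \delta_n := \sup_{s\in[0,m]}|g_n(s)-g(s)|,
\]
both of which tend to zero by hypothesis. A convenient preliminary observation is that $g(t), g_n(t) \in [0,m]$ for all $t\in[0,m]$, since $g$ and the $g_n$ are Lipschitz-1 with $g(0)=g_n(0)=0$; consequently the compositions $f(g(t))$ and $f_n(g_n(t))$ only evaluate $f$ and $f_n$ on the compact interval where uniform convergence has been assumed.

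For part (a), the approach is the triangle-inequality split
\[
|f_n(g_n(t)) - f(g(t))| \leq |f_n(g_n(t)) - f(g_n(t))| + |f(g_n(t)) - f(g(t))|.
\]
The first summand is bounded uniformly in $t\in[0,m]$ by $\varepsilon_n$, using the preliminary observation above, and the second is bounded by $|g_n(t)-g(t)| \leq \delta_n$ since $f$ is Lipschitz-1. Taking the supremum in $t$ and letting $n\to\infty$ yields (a).

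For part (b), the plan is to reduce the Stieltjes integral convergence to the uniform convergence already in hand via integration by parts. Since $f$ is continuous and $g_n$ is non-decreasing and of bounded variation, the classical Riemann--Stieltjes integration-by-parts formula gives $\int_0^m f\diff g_n = f(m)\,g_n(m) - \int_0^m g_n\diff f$, and analogously with $g$ in place of $g_n$. Subtracting these two identities and using that $f$ has total variation at most $m$ on $[0,m]$ (since it is Lipschitz-1) gives
\[
\Bigl|\int_0^m f\diff g_n - \int_0^m f\diff g\Bigr|
\leq |f(m)|\cdot|g_n(m)-g(m)| + m\,\delta_n \to 0.
\]
The remaining contribution $\int_0^m(f_n-f)\diff g_n$ is bounded in absolute value by $\varepsilon_n\,g_n(m) \leq \varepsilon_n\, m$, which vanishes. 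Combining these two estimates via the triangle inequality yields (b).

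The only point that needs care is the justification of the Riemann--Stieltjes integration-by-parts identity under the regularity at hand, though this is standard when one function is continuous and the other is of bounded variation, both of which hold here. Should one prefer to avoid Stieltjes integration by parts, an equivalent route is to approximate the continuous function $f$ uniformly on $[0,m]$ by step functions on a fine partition and invoke the pointwise convergence $g_n(t_k)\to g(t_k)$ at the partition points; either way, the essential ingredients are the uniform convergence of both sequences on $[0,m]$ together with the Lipschitz (hence BV) property of $f$.
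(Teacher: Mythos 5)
Your proof is correct. For part (a), you use the symmetric triangle-inequality split at $f(g_n(t))$ rather than at $f_n(g(t))$ as the paper does (the paper applies the Lipschitz-1 property of $f_n$ and uniform convergence at $g(t)$; you apply uniform convergence at $g_n(t)$ and the Lipschitz-1 property of $f$). These are interchangeable, and both require exactly your preliminary observation that the inner functions map $[0,m]$ into $[0,m]$. For part (b), however, you take a genuinely different route: the paper isolates $\int_0^m (f_n-f)\diff g_n$ exactly as you do, but then handles the remaining difference by introducing a simple-function approximant $h$ of $f$ and using that $\int_0^m h\diff g_n \to \int_0^m h\diff g$ from pointwise convergence at the jump points of $h$; you instead apply Riemann--Stieltjes integration by parts to transfer the convergence from the integrator $g_n$ to the integrand, and then control $\int_0^m (g_n-g)\diff f$ by the total variation of $f$, which is at most $m$ by the Lipschitz-1 hypothesis. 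Your integration-by-parts argument is somewhat shorter and leans more heavily on the Lipschitz (hence BV) regularity of $f$; the paper's simple-function argument only uses continuity of $f$ and would survive if the Lipschitz assumption on the $f$'s were weakened to mere uniform continuity, which makes it a touch more robust, but in the present lemma both hypotheses are available so either argument is adequate. You correctly flag the one delicate point (validity of the Stieltjes integration-by-parts identity), and it indeed holds here since $f$ is continuous and $g_n$, $g$ are of bounded variation with no shared discontinuities; you even sketch the paper's step-function alternative as a fallback.
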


\begin{proof}
	Since $g(0) = 0$ and $g$ is Lipschitz-1, $t\in[0,m]$ implies $g(t) \in 
	[0,m]$. So, using that $f_n$ is Lipschitz-1 as well, it follows that 
	$\absbig{ f_n(g_n(t)) - f(g(t)) }$ is bounded for all~$t\in[0,m]$ by
	\[\begin{split}
		\absbig{ f_n(g_n(t)) - f_n(g(t)) }
			+ \absbig{ f_n(g(t)) - f(g(t)) }
		\leq \abs{ g_n(t)-g(t)}
			+ \sup\nolimits_{t\in[0,m]} \abs{ f_n(t) - f(t) },
	\end{split}\]
	from which we obtain~(a). To prove~(b), fix $\varepsilon > 0$. For any 
	simple function~$h\colon [0,m] \to\nobreak \R_+$, we can write the 
	difference between the Lebesgue--Stieltjes integrals $\int_0^m f_n \diff 
	g_n$ and~$\int_0^m f \diff g$ as
	\[\begin{split}
		\int_0^m (f_n-f) \diff g_n
			+ \int_0^m (f-h) \diff g_n
			+ \int_0^m (h-f) \diff g
			+ \int_0^m h\diff g_n - \int_0^m h\diff g.
	\end{split}\]
	We can choose~$n$ sufficiently large and, because $f$ is uniformly 
	continuous on~$[0,m]$, a simple function~$h$ so that $\abs{f_n-f} < 
	\varepsilon/m$ and $\abs{f-h} < \varepsilon/m$ uniformly on~$[0,m]$. Since 
	the functions $g_n$ and~$g$ are Lipschitz-1, it follows that each of the 
	first three integrals is bounded in absolute value by~$\varepsilon$. As 
	for the last two integrals, $\int_0^m h \diff g_n \to \int_0^m h\diff g$ 
	because $h$ is simple.
\end{proof}

\begin{proof}[Proof of Theorem~\ref{Thm: main thm convergence fluid limit}]
	We start with some observations. Recall that in Section~\ref{sec: fluid 
	model} we extended the discrete-time processes from Section~\ref{sec: 
	queueing equations} to continuous time by linear interpolation. Note that 
	in discrete time, the processes $Q^x_{ij} (\cdot)$ and~$T^x_{ij} (\cdot)$ 
	make steps of absolute size 0 or~1 only. It follows that their 
	continuous-time extensions are Lipschitz-1, and that the discrete-time 
	queueing equation~\eqref{queueing eq 1} in fact holds at all times~$t \in 
	\R_+$ for the continuous-time processes. But if a function~$f$ from~$\R_+$ 
	to~$\R_+$ is Lipschitz-1, then for any~$n \in \N$, so is the 
	function~$f_n$ defined by $f_n(t) := f(nt)/n$, since $\abs{ f(nt)-\nobreak 
	f(ns) } / n$ is bounded by~$\abs{t-\nobreak s}$ for all~$s,t \in \R_+$. 
	Hence, the scaled processes $SQ^x_{ij} (\cdot)$ and~$ST^x_{ij} (\cdot)$ 
	are also Lipschitz-1, and \eqref{queueing eq 1} implies
	\begin{equation}
		SQ_{ij}^x(t)
		= SQ_{ij}^x(0) + \frac1{\norm{x}} A_{ij}\bigl( \norm{x}t \bigr)
			+ \sum\nolimits_{(k,\ell) \in \cK} \frac1{\norm{x}}
				\Phi^{k\ell}_{ij} \bigl( \norm{x} \, ST_{k\ell}^x(t) \bigr)
			- ST_{ij}^x(t)
		\label{queueing eq 1 scaled}
	\end{equation}
	for all initial states~$x$ with $\norm{x} \geq 1$. Similarly, the scaled 
	idle time process for station~$i$, defined by $SI_i^x (t) := t - 
	\sum_{j=0}^L ST_{ij}^x (t)$, is Lipschitz-1, and the work-conserving 
	property~\eqref{queueing eq 5} implies
	\begin{equation}
		\label{queueing eq 5 scaled}
		\int_0^\infty Q^x_{i0}\bigl( \floor{t} \bigr) \diff I^x_i(t)
		= 0
		= \int_0^\infty SQ^x_{i0}\biggl( \frac{\floor{ \norm{x} t }} 
		{\norm{x}} \biggr) \diff SI_i^x(t).
	\end{equation}

	Now let $\{x'_n\}$ be a sequence of initial states with $\norm{x'_n} \to 
	\infty$ as $n \to \infty$. By the strong law of large numbers, we know 
	that $A_{ij}(n)/n$ converges almost surely to~$p_{ij}$ and 
	$\Phi^{k\ell}_{ij} (n)/n$ converges almost surely to~$P_{k\ell,ij}$ as $n$ 
	tends to~$\infty$ through the integers. So by Lemma~\ref{Lemma: fLLN}, 
	there exists a set~$\Omega' \subset \Omega$ of measure~1 such that, on 
	this set~$\Omega'$, for every~$m\in\N$ and all $(i,j), (k,\ell) \in\cK$,
	\[
		\sup_{t\in[0,m]} \, \Bigl\lvert\,
			\frac1n A_{ij}(nt) - p_{ij}t \,\Bigr\rvert \to 0
		\qquad\text{and}\qquad
		\sup_{t\in[0,m]} \, \Bigl\lvert\,
			\frac1n \Phi^{k\ell}_{ij}(nt) - P_{k\ell,ij}t \,\Bigr\rvert \to0.
	\]
	This shows in particular that on~$\Omega'$, the second term on the right 
	in~\eqref{queueing eq 1 scaled} converges uniformly on any compact 
	set~$[0,m]$ along the sequence~$\{x'_n\}$ to the limit function~$p_{ij}t$. 
	Next, we want to show that the other terms converge uniformly on~$[0,m]$ 
	as well, along an appropriate subsequence.

	As for the first term, since each of the vectors~$SQ^{x'_n} (0)$ lies in 
	the compact set~$[0, 1]^{L^2+L}$, there is a subsequence of~$\{x'_n\}$, 
	that we denote by~$\{x^*_n\}$, along which they converge to some limit 
	vector~$\bQ(0)$. We may assume that $\norm{x^*_n} \geq 1$ for each~$n$, 
	and since $\norm{ SQ^{x^*_n} (0) } = 1$ for each~$n$, it is clear that 
	$\norm{\bQ(0)} = 1$ as well. Now pick any sample point~$\omega \in 
	\Omega'$, and consider the sequence of functions~$\bigl\{ ST^{x^*_n} 
	(\omega, \cdot) \bigr\}$. These functions have Lipschitz-1 components, so 
	if we restrict their domain to the interval~$[0, m]$, it follows from the 
	Arzel\`a--Ascoli theorem~\cite [Theorem~7.3] {billingsley} that the 
	sequence is relatively compact in the space of continuous functions 
	on~$[0,m]$ with the uniform norm. Hence, for every~$m \in \N$, the 
	sequence~$\{x^*_n\}$ has a subsequence~$\{ x^*_{m,n} (\omega) \}$ along 
	which
	\begin{equation}
		\label{Eqn: uoc convergence of T}
		\sup_{t\in[0,m]} \, \normbig{ ST^{x^*_{m,n} (\omega)} (\omega, t) - 
		\bT(\omega, t) } \to 0
		\qquad \text{as $n\to\infty$}
	\end{equation}
	for some limit function~$\bT(\omega, \cdot)$. It then follows from a 
	standard diagonal argument that we can in fact find a subsequence~$\{ 
	x''_n (\omega) \}$ of~$\{x^*_n\}$ such that~\eqref{Eqn: uoc convergence of 
	T}, with $x^\ast_{m,n} (\omega)$ replaced by~$x''_n (\omega)$, holds for 
	all~$m \in \N$ at the same time.

	By \eqref{queueing eq 1 scaled} and Lemma~\ref{Lemma: uoc convergence of 
	sequences}(a), this completes the proof of~\eqref{main thm convergence to 
	fluid limit} and shows that on~$\Omega'$, the fluid limit~$\bQ (\cdot), 
	\bT (\cdot)$ satisfies~\eqref{fluid eq 1}. Moreover, we have seen 
	that~$\norm{ \bQ(0) } =\nobreak 1$. It remains to be shown that the fluid 
	limit satisfies \eqref{fluid eq 2}--\eqref{fluid eq 6} on~$\Omega'$. Since 
	the uniform limit of a sequence of Lipschitz-1 functions is Lipschitz-1, 
	\eqref{fluid eq 2} and~\eqref{fluid eq 3} follow from \eqref{queueing eq 
	2} and~\eqref{queueing eq 3}, and \eqref{fluid eq 6} follows directly from 
	the fact that $Q^x_{ij} (t) \leq 1$ for $i,j \in \{1,\dots,L\}$ and 
	all~$t\geq0$. Clearly, \eqref{fluid eq 4} follows from~\eqref{queueing eq 
	4}. Finally, using the fact that our scaled processes are Lipschitz-1, we 
	see that \eqref{queueing eq 5 scaled} implies
	\[
		0 \leq \int_0^m SQ^{x''_n}_{i0} (t) \diff SI^{x''_n}_i (t)
		= \int_0^m \biggl[ SQ^{x''_n}_{i0} (t) - SQ^{x''_n}_{i0} \biggl( 
		\frac{\floor{ \norm{x''_n}t }} {\norm{x''_n}} \biggr) \biggr] \diff 
		SI^{x''_n}_i (t)
		\leq \frac{m}{\norm{x''_n}}
	\]
	for all~$m\in\N$ because the second integrand is uniformly bounded 
	by~$\norm{x''_n}^{-1}$. By Lemma~\ref{Lemma: uoc convergence of 
	sequences}(b), taking $n\to\infty$ gives $\int_0^m \bQ_{i0}(t) \diff 
	\bI_i(t) = 0$ for all~$m \in \N$, which is equivalent to~\eqref{fluid eq 
	5}.
\end{proof}

\begin{proof}[Proof of Proposition~\ref{Prop: original Markov chain pos rec if 
	fluid limit model is stable}]
	Suppose the fluid model~\eqref{fluid eq 1}--\eqref{fluid eq 6} is stable. 
	By Definition~\ref{Def: fluid model stability}, there exists a constant 
	integer time~$t \geq \delta > 0$ such that any fluid model solution 
	$\bQ(\cdot), \bT(\cdot)$ with $\norm{\bQ(0)} = 1$ satisfies $\bQ(t) = 0$. 
	Now choose a sequence of initial states~$\{x_n\}$ that contains each state 
	in the state space of~$X^2(\cdot)$ exactly once, so that $\norm{x_n} \to 
	\infty$ as~$n \to \infty$. Consider the queue length and service time 
	processes~$Q^{x_n} (\cdot), T^{x_n} (\cdot)$. Let $\Omega'$ be the 
	measure~1 set of sample points from Theorem~\ref{Thm: main thm convergence 
	fluid limit}, and let $\{x'_n\}$ be an arbitrary subsequence of~$\{x_n\}$. 
	Then by Theorem~\ref{Thm: main thm convergence fluid limit}, we can find 
	for each~$\omega \in \Omega'$ a further subsequence~$\{x''_n (\omega)\}$ 
	of~$\{x'_n\}$ and a fluid model solution~$\bQ(\omega, \cdot), \bT(\omega, 
	\cdot)$, so that on the set~$\Omega'$
	\[
		\normbig{ SQ^{x''_n}(t) }
		= \normbig{ SQ^{x''_n}(t) - \bQ(t) } \to 0
		\qquad \text{as $n\to\infty$}.
	\]
	Since we can do this for every subsequence~$\{x'_n\}$, it follows that 
	$\norm{ SQ^{x_n}(t) }$ must converge to~0 almost surely along the original 
	sequence~$\{x_n\}$. But the components of~$SQ^{x_n} (\cdot)$ are 
	Lipschitz-1 and $\norm{ SQ^{x_n} (0) } = 1$, so $\norm{ SQ^{x_n} (t) }$ is 
	bounded by $1 + (L^2+L)t$. Hence, by bounded convergence,
	\[
		\lim_{n\to\infty} \E \normbig{ SQ^{x_n}(t) }
		= \lim_{n\to\infty} \frac1{\norm{x_n}} \,
			\E \normbig{ Q^{x_n} \bigl( \norm{x_n}t \bigr) }
		= 0.
	\]

	Now we recall that the queue length process~$Q^{x_n} (\cdot)$ observed at 
	discrete times has the same law as the Markov chain~$X^2 (\cdot)$ starting 
	from the state~$x_n$. Therefore, we can conclude that there exists an $N 
	\in \N$ such that for each $n>N$, we have that $\norm{x_n} > \norm{x_N}$ 
	and
	\[
		\E_{x_n} \normbig{ X^2\bigl( \norm{x_n}t \bigr) }
		\leq \tfrac12 \norm{x_n}.
	\]
	So if we define the constants $K := \norm{x_N}$, $\gamma := (2t)^{-1}$ and 
	the stopping time $\tau := \normbig{X^2(0)} \, t \bmax 1$, and let 
	$f\colon \cS^2 \to \R_+$ be the function given by $f(x) := \norm{x}$, then 
	the following holds:
	\begin{enumerate}
		\item[(a)] $\E_x\bigl[ f\bigl( X^2(\tau) \bigr) - f(x) \bigr] \leq 
			-\gamma \, \E_x[\tau]$ when $f(x) > K$;
		\item[(b)] The set $F := \{ x \in \cS^2 \colon f(x) \leq K \}$ is 
			finite and $\E_x\bigl[ f\bigl( X^2(1) \bigr) \bigr] < \infty$ for 
			all $x\in F$ (because at most~$L$ customers can arrive to the 
			multiclass network in a single time step).
	\end{enumerate}
	The statement in~(a) is known as a Foster--Lyapunov condition, and 
	together with~(b), it implies positive recurrence of the Markov 
	chain~$X^2(\cdot)$ by Theorem~8.6 in~\cite{robert2003}.
\end{proof}

\end{appendices}

\end{document}